\providecommand\texorpdfstring[2]{#1}
\renewcommand{\H}{\mathcal{H}}          
\newcommand{\I}{\mathcal{I}}            
\newcommand{\J}{\mathcal{J}}            
\newcommand{\N}{\mathcal{N}}            
\newcommand{\W}{\mathcal{W}}            
\newcommand{\Z}{\mathcal{Z}}            
\newcommand{\ID}{\mathbb{D}}            
\let\a\alpha
\let\g\gamma
\let\b\beta
\let\z\zeta
\newcommand{\del}{\partial}
\newcommand{\dbar}{\bar\partial}
\newcommand{\invL}{\mathop{\tilde\Delta}}
\newcommand{\av}[1]{\left| #1 \right|}
\newcommand{\st}{:}
\newcommand{\re}{\mathop{\mathrm{Re}}\nolimits}
\newcommand{\im}{\mathop{\mathrm{Im}}\nolimits}
\let\eps\epsilon
\let\intersect\cap
\let\union\cup
\let\Union\bigcup
\let\phi\varphi
\let\term\emph
\newtheorem{theorem}{Theorem}[section]
\newtheorem{lemma}[theorem]{Lemma}
\newtheorem{proposition}[theorem]{Proposition}
\theoremstyle{remark}
\numberwithin{equation}{section}
\begin{document}

\title[Interpolation Schemes]%
{Interpolation Schemes in Weighted Bergman Spaces}
\author{Daniel H. Luecking}
\address{Department of Mathematical Sciences\\
         University of Arkansas\\
         Fayetteville, Arkansas 72701}
\email{luecking@uark.edu}
\date{June 6, 2014}

\subjclass{Primary 46E20}
\keywords{Bergman space, interpolating sequence, upper density,
uniformly discrete}

\begin{abstract}
    We extend our development of interpolation schemes in \cite{Lue04b}
    to more general weighted Bergman spaces.
\end{abstract}

\maketitle

\section{Introduction}

Let $A$ denote area measure and let $G$ be a domain in the complex
plain. Let $\H(G)$ denote the space of holomorphic functions on $G$ and
$L^p(G)= L^p(G,dA)$ the usual Lebesgue space of measurable functions
$f$ with $\| f \|_{p,G}^p = \int_G |f|^p \,dA < \infty$. The
\term{Bergman space $A^p(G)$} is $L^p(G) \intersect
\H(G)$, a closed subspace of $L^p(G)$. If $1 \le p < \infty$, $A^p(G)$ are
Banach spaces and if $0<p<1$ they are quasi-Banach spaces. We will allow
all $0<p<\infty$ and abuse the terminology by calling $\| \cdot
\|_{p,G}$ a norm even when $p < 1$.  In the case where $G = \ID$, the
open unit disk, we will abbreviate: $L^p = L^p(\ID)$, $A^p=
A^p(\ID)$~and $\| \cdot \|_p = \| \cdot \|_{p,\ID}$.

Let $\psi(z,\z)$ denote the \term{pseudohyperbolic metric}:
\begin{equation*}
  \psi(z,\z) = \left|\frac{z-\z}{1-\bar\z z}\right|.
\end{equation*}
We will use $D(z,r)$ for the \term{pseudohyperbolic disk} of radius $r$
centered at $z$, that is, the ball of radius $r<1$ in the
pseudohyperbolic metric. Let $d\lambda(z) = (1 - |z|^2)^{-2}dA(z)$
denote the \term{invariant area measure} on $\ID$.

We abbreviate derivatives $\partial/\partial z$ and $\partial/\partial
\bar z$ by $\del$ and $\dbar$ and the combination $\del\dbar u$ will be
called the \term{Laplacian of $u$}. The \term{invariant Laplacian} of $u$, denoted
$\invL u$, is defined by $\invL u(z) = (1 - |z|^2)^2\del\dbar u(z)$.

Let $\phi$ be a $C^2$ function in $\ID$ satisfying $0 < m \le \invL
\phi(z) \le M < \infty$, for positive constants $m$ and $M$. We define the
weighted Bergman space $A_\phi^p$ to consist of all functions $f$ that
are analytic in $\ID$ and satisfy the following
\begin{equation}
  \| f \|_{\phi,p} = \left( \int_\ID  \frac{\av{f(z)e^{-\phi(z)}}^p} {1 -
  |z|^2} \,dA(z)\right)^{1/p} < \infty
\end{equation}
With $p=2$ only, these spaces were considered by A. Schuster and T.
Wertz in \cite{SW13} (our formulation differs by a factor of 2 in
$\phi$). In that paper, a necessary condition was obtained for a certain
weighted interpolation problem they called O-\term{interpolation}
(presumably after its origins in a paper by S. Ostrovsky \cite{Ost10}).

The purpose of this paper is to extend the current author's results in
\cite{Lue04b} to these more general weighted Bergman spaces, and
as a consequence to extend the results of \cite{SW13} to $p \ne 2$.

Following \cite{Lue04b}, we define an \term{interpolation scheme $\I$}
to consist of connected open sets $G_k \subset \ID$, $k=1,2,3,\dots$ and
corresponding disjoint finite nonempty multisets $Z_k \subset G_k$
(\term{multisets} are sets with multiplicity) satisfying the following
\begin{enumerate}
  \item there exists $\eps>0$ such that $(\Z_k)_\eps \subset G_k$ for
        every $k$, and
  \item there exists $0<R<1$ such that for every $k$ the
        pseudohyperbolic diameter of $G_k$ is no more than $R$.
\end{enumerate}
The notation $(S)_\eps$ for a subset $S\subset \ID$ denotes the
$\eps$-neighborhood of $S$ (in the pseudohyperbolic metric), and the
\term{pseudohyperbolic diameter} of a set $S\subset \ID$ is $\sup\{
\psi(z,w)\st z,w\in S \}$. We remark that $G_k$ are not required to be
disjoint. They are also not required to be simply connected, but it is
no real loss of generality to assume that they are, or even to
assume that $G_k$ are pseudohyperbolic disks of constant radius.

Since finite sets are trivial for our problem, we will always assume the
number of clusters is countably infinite.

Given a pair $(G_k, Z_k)$ in an interpolation scheme $\I$, let $\N_k$
consist of all functions in $\H(G_k)$ (holomorphic on $G_k$) that vanish
on $Z_k$ with the given multiplicities. An interpolation problem can be
thought of as specifying values for $f$ and its derivatives at the
points of $\Z = \Union Z_k$, but it could equally well be thought of as
specifying functions $g_k \in \H(G_k)$ and requiring $g_k - f|_{G_k} \in
\N_k$. That is, we consider certain sequences $(w_k)$ where each $w_k$
is a \emph{coset} of $\N_k$ in $\H(G_k)$ and then we say that $f$
\term{interpolates} $(w_k)$ if, for each $k$, $f|_{G_k} \in w_k$. Simple
interpolation corresponds to the case where each $Z_k$ is a singleton
$\{ z_k \}$. Then the quotient space $\H(G_k)/\N_k$ is one dimensional
and each coset is determined by the common value of its members at
$z_k$.

Given this point of view we need to provide an appropriately normed
sequence space and define our interpolation problem. We suppress the
dependence on $p$ and $\phi$ in the notation and define the sequence
space $X_\I$ to consist of all sequences $w = (w_k)$ where $w_k \in E_K
= \H(G_k)/\N_k$ and $\| w \| = \left( \sum \| w_k \|^p \right)^{1/p} <
\infty$, where the norm of the coset $w_k$ is the quotient norm:
\begin{equation}
  \| w_k \|^p = \inf \left\{ \int_{G_k} \frac{\av{g(z) e^{-\phi(z)}}^p}{1-|z|^2}
  \,dA(z) \st g\in w_k \right\}
\end{equation}
Since every coset of $\N_k$ contains a polynomial, the norms $\| w_k \|$
are finite. It is not hard, especially in light of later results, to see
that in the case of singleton $Z_k$ this is equivalent to a space
consisting of sequences of constants $(c_k)$ satisfying
\begin{equation*}
  \| (c_k) \|^p = \sum |c_k|^p e^{-p\phi(z_k)} (1 - |z_k|^2) < \infty\,.
\end{equation*}

Now we can define the interpolation problem and interpolating sequences.
The interpolation problem is the following: given a sequence $(w_k) \in
X_\I$, find a function $f \in A^p_\phi$ such that $f|_{G_k} \in w_k$ for
every $k$. Since a coset $w_k$ can be represented by a function $g_k$ on
$G_k$ with norm arbitrarily close to that of $w_k$, we could equally
well describe the problem by: given analytic functions $g_k$ on $G_k$
for each $k$, satisfying
\begin{equation*}
  \sum_{k} \int_{G_k} \frac{\av{ g_k(z) e^{-\phi(z)}}^p}{1 - |z|^2} \,dA(z)
  < \infty
\end{equation*}
find $f \in A^p_\phi$ such that $g_k - f|_{G_k} \in \N_k$.

We say $\Z = \Union Z_k$ is an \term{interpolating sequence relative to
the scheme $\I$} if every such interpolation problem has a solution.

That is, if we define the \term{interpolation operator $\Phi$} by
letting $\Phi(f)$ be the sequence of cosets $\left(f|_{G_k} + \N_k \right)$,
then an interpolating sequence is one where $\Phi(A^p_\phi)$ contains
$X_\I$. At the moment, we do not require that $\Phi$ take $A^p_\phi$
\emph{into} $X_\I$, but we will see that it does in fact do so, and is a
bounded linear mapping.

One important step will be to show that if $\Z$ is an interpolating
sequence relative to a scheme $\I$ then the scheme must satisfy two
additional properties: (1)~there is a positive lower bound on the
distance between different $Z_k$ and (2)~there is an upper bound on the
cardinality of the $Z_k$. Schemes satisfying these two properties will
be called \term{admissible}, and our main theorem will be that a
sequence is interpolating relative to an admissible scheme if and only
if it satisfies a density inequality we will define later. An important
property of this result is that the density inequality depends only on
the sequence $\Z$ and not on the scheme itself. That is why we apply the
adjective `interpolating' to $\Z$ rather than the scheme. Also, once
this has been established, the qualification `relative to $\I$' will
become redundant.

\section{Preliminary results}

It may not be immediately obvious that $A^p_\phi$ is nontrivial. This
will follow from the following two results.

\begin{lemma}
Let $\phi$ be subharmonic and suppose there exist constants $0 < m \le
M < \infty$ such that $m \le \invL \phi(z) \le M$ for all $z\in \ID$.
If $\a > 0$ and we set
\begin{equation*}
  \tau(z) = \phi(z) - \a\log\left( \frac{1}{1-|z|^2} \right)
\end{equation*}
then
\begin{equation*}
  m - \a \le \invL \tau(z) \le M - \a\quad \text{and}\quad
    \frac{e^{-p\phi(z)}}{1 - |z|^2} = e^{-p\tau(z)}(1 - |z|^2)^{\a p - 1}
\end{equation*}
\end{lemma}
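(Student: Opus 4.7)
The statement is entirely computational, so my plan is just to verify the two claims by direct calculation, with the only ``content'' being the invariant Laplacian of $\log(1-|z|^2)$.

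First I would handle the algebraic identity. By definition, $\tau(z) = \phi(z) - \alpha\log\bigl(1/(1-|z|^2)\bigr) = \phi(z) + \alpha\log(1-|z|^2)$, so exponentiating gives $e^{-p\tau(z)} = e^{-p\phi(z)}(1-|z|^2)^{-\alpha p}$. Multiplying both sides by $(1-|z|^2)^{\alpha p - 1}$ immediately produces $e^{-p\tau(z)}(1-|z|^2)^{\alpha p - 1} = e^{-p\phi(z)}/(1-|z|^2)$, which is the second claim.

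For the first claim, the key subcomputation is $\invL\log(1-|z|^2) = -1$. Writing $u(z) = \log(1 - z\bar z)$, a direct calculation gives $\dbar u = -z/(1-|z|^2)$, and then the quotient rule yields $\laplace u = -1/(1-|z|^2)^2$. Multiplying by $(1-|z|^2)^2$ gives $\invL u = -1$, as claimed. By linearity of $\laplace$ (hence of $\invL$), we obtain
\begin{equation*}
    \invL \tau(z) = \invL \phi(z) + \alpha\,\invL\log(1-|z|^2) = \invL\phi(z) - \alpha,
\end{equation*}
and combining with the hypothesis $m \le \invL\phi(z) \le M$ gives the two-sided bound $m - \alpha \le \invL\tau(z) \le M - \alpha$.

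There is no real obstacle here; the only point to be careful about is the sign in $\invL\log(1-|z|^2) = -1$, which is exactly what makes $\alpha$ subtract (rather than add) to the bounds for $\invL\tau$. With that sign pinned down, both assertions of the lemma are one-line consequences.
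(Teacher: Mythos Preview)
Your proof is correct and matches the paper's own treatment, which simply states that ``the proof is an obvious computation.'' You have supplied exactly that computation, including the key fact $\invL\log(1-|z|^2)=-1$, so nothing further is needed.
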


The proof is an obvious computation. Since $\tau$ satisfies the same
condition as $\phi$ if $\a$ is chosen with $\a < m$, the set of spaces
$A^p_\phi$ (ranging over all such $\phi$) are the same as set the spaces
$A^{p,\a}_\phi$ (ranging over all such $\phi$ and all $\a > 0$), whose
norms are defined by
\begin{equation*}
  \| f \|_{p,\phi,\a} = \left( \int_{\ID} \av{f(z) e^{-\phi(z)}}^p(1 -
    |z|^2)^{\a p - 1} \,dA(z) \right)^{1/p}
\end{equation*}

The following was proved in \cite{Lue04b} and also in \cite{SW13}
(stated somewhat differently and with a somewhat different proof).

\begin{lemma}\label{lem:harmonic}
Let $\phi$ be subharmonic and assume $\invL \phi$ is bounded. Then there
exists a constant $C$ and, for each $a \in \ID$, a harmonic function
$h_a$ such that the difference $\tau_a = \phi - h_a$ satisfies
\begin{enumerate}
  \item $\tau_a (z) \ge 0$ for all $z \in \ID$,
  \item $\tau_a (a) \le C\| \invL \phi \|_\infty$, and
  \item $\| (1-|z|^2)\dbar \tau_a(z) \|_\infty \le C\| \invL \phi
        \|_\infty$.\label{eq:grad}
\end{enumerate}
\end{lemma}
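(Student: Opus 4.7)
The plan is to reduce by M\"obius invariance to the case $a = 0$, and then to construct $h_0$ as a regularized Green-type potential. For the reduction: let $\varphi_a(z) = (a-z)/(1-\bar az)$ be the involution swapping $0$ and $a$. Since $\invL$ is M\"obius invariant, $\invL(\phi\circ\varphi_a) = (\invL\phi)\circ\varphi_a$ satisfies the same upper bound; since $\varphi_a$ is holomorphic, precomposing a harmonic function with $\varphi_a$ is again harmonic; and from the identity $(1-|z|^2)|\varphi_a'(z)| = 1 - |\varphi_a(z)|^2$ one checks
\[
(1-|z|^2)\,|\dbar_z(u\circ\varphi_a)(z)| = (1-|\varphi_a(z)|^2)\,|(\dbar u)(\varphi_a(z))|,
\]
so the quantity controlled in (c) is M\"obius invariant. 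Thus, if we can produce $(h_0,\tau_0)$ for $\phi\circ\varphi_a$ satisfying the three conclusions at the origin, then $(h_0\circ\varphi_a,\tau_0\circ\varphi_a)$ satisfies them for $\phi$ at $a$ with the same constant $C$.

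For $a=0$ I would look for $\tau_0 = \phi - h_0$ solving $\del\dbar\tau_0 = \del\dbar\phi$ (so that $h_0$ is automatically harmonic on $\ID$), represented as a potential
\[
\tau_0(z) = -\frac{2}{\pi}\int_\ID \log\psi(z,w)\,\del\dbar\phi(w)\,dA(w)
\]
against the nonnegative Green kernel $-\log\psi(z,w)$. The bound $\del\dbar\phi(w) \le \|\invL\phi\|_\infty/(1-|w|^2)^2$ is too singular at $\del\ID$ for this integral to converge directly, so I would regularize by subtracting from $-\log\psi(\cdot,w)$ its constant (and, if needed, linear) Taylor terms in $z$ at $z=0$. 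Each subtracted piece is harmonic in $z$ and so preserves both the equation $\del\dbar\tau_0 = \del\dbar\phi$ and the harmonicity of $h_0$, while the improved vanishing at $z=0$ and the improved decay at the boundary together restore absolute convergence.

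With the regularized $\tau_0$ in hand, the conclusions fall out in order: (b) by evaluating at $z=0$, where the kernel has been arranged to vanish and only a residual bounded by the $\|\invL\phi\|_\infty$ estimate remains; (c) by differentiating under the integral and combining a uniform pointwise bound on $(1-|z|^2)|\dbar_z K(z,w)|$ with the same estimate on $\del\dbar\phi$; and (a) from the nonnegativity of the unmodified kernel together with subharmonicity $\del\dbar\phi \ge 0$, recovered if necessary by an exhaustion over subdisks $\{|w|<r\}$, $r\to 1$, on which the unmodified potential is finite and manifestly nonnegative. The main obstacle is engineering the regularization so that absolute convergence, pointwise nonnegativity, and the uniform weighted-gradient bound hold simultaneously, and so that differentiation may be interchanged with the integral in the proof of (c); the exhaustion step is the standard workaround for recovering positivity if the cleanest integrable kernel loses sign after subtraction.
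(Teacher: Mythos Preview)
Your overall plan---reduce to $a=0$ by M\"obius invariance, write $\tau_0$ as an explicit integral against $\del\dbar\phi$, and obtain part~(c) by differentiating under the integral sign---is exactly what the paper does. The paper gives no proof beyond citing \cite{Lue04b} and \cite{SW13} and remarking that ``the last statement \dots\ comes out of the integral formula for $\phi(z)-h_0(z)$: differentiate under the integral sign and apply standard estimates.''

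There is, however, a genuine gap in your regularization. Subtracting the Taylor polynomial of $G(z,w)=-\log\psi(z,w)$ in $z$ at $z=0$ does \emph{not} improve the decay of the kernel as $|w|\to 1$. Expanding,
\[
-\log\psi(z,w)= -\log|w|+\re\sum_{k\ge 1}\frac{z^k}{k\,w^k}\bigl(1-|w|^{2k}\bigr),
\]
and each coefficient $1-|w|^{2k}$ behaves like $k(1-|w|^2)$ near the boundary. So after you strip off the constant term, or the constant and linear terms, or any finite jet, the remainder is still only $O\bigl((1-|w|^2)\bigr)$, and integrating against $\del\dbar\phi(w)\,dA(w)\le \|\invL\phi\|_\infty(1-|w|^2)^{-2}\,dA(w)$ still diverges logarithmically. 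Your exhaustion over $\{|w|<r\}$ does not rescue this either: the unregularized potential at $z=0$ tends to $+\infty$, so you cannot keep both (a) and (b) in the limit.

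What actually works is to subtract a \emph{different} harmonic function of $z$, chosen for its effect in $w$ rather than for matching a Taylor jet at $z=0$. The paper itself points you to it: the convergence factor in $\Psi_\Z$ gives the modified kernel
\[
\log\psi(z,w)+\log|w|+\re\frac{1-|w|^2}{1-\bar w z},
\]
which is still harmonic in $z$ away from $w$, and a short calculation shows it equals $-k_w(z)+O\bigl((1-|w|^2)^3\bigr)$ as $|w|\to 1$, with $k_w$ the function defined in Section~\ref{sec:density}. That quadratic decay $(1-|w|^2)^2$ is precisely what makes the integral against $\invL\phi\,d\lambda$ absolutely convergent, and then your outline for (b) and (c) goes through. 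Positivity~(a) is no longer immediate from the kernel, but once the integral formula is in hand it follows by comparing with the (nonnegative) Green potentials on exhausting subdisks, as you suggested.
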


The last statement in the lemma was not mentioned in \cite{Lue04b}, but
comes out of the integral formula for $\phi(z) - h_0(z)$: differentiate
under the integral sign and apply standard estimates. It happens that
$C$ does not depend on $\phi$, but it is more important that it
does not depend on $a \in \ID$.

The gradient inequality \ref{eq:grad} implies the following.

\begin{lemma}\label{lem:gradient}
With the same hypotheses as Lemma~\ref{lem:harmonic} and the same $h_a$,
let $0\le R < 1$. Then $\phi(z) - h_a(z)$ is Lipschitz in the
hyperbolic metric \textup{(}with Lipschitz constant a multiple of $\| \invL \phi
\|_\infty$\textup{)}, and therefore there exists $C_R$ such that $\phi(z) -
h_a(z) \le C_R\| \invL \phi \|_\infty$ for all $z \in D(a,R)$.
\end{lemma}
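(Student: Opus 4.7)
The plan is to convert the $\dbar$-estimate of Lemma~\ref{lem:harmonic}\,(c) into a full Euclidean gradient estimate and then integrate along a hyperbolic geodesic.

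Since $\tau_a = \phi - h_a$ is real-valued, $\partial \tau_a = \overline{\dbar\tau_a}$, so the Euclidean gradient satisfies $|\grad\tau_a(z)| = 2|\dbar\tau_a(z)|$. The estimate (c) then gives
\begin{equation*}
    (1-|z|^2)|\grad\tau_a(z)| \le 2C\| \invL\phi\|_\infty
\end{equation*}
for every $z \in \ID$. The hyperbolic line element on $\ID$ is proportional to $|dz|/(1-|z|^2)$, so for any piecewise smooth curve $\gamma$ joining $z$ to $w$,
\begin{equation*}
    |\tau_a(z) - \tau_a(w)| \le \int_\gamma |\grad\tau_a(\zeta)|\,|d\zeta| \le 2C\|\invL\phi\|_\infty \int_\gamma \frac{|d\zeta|}{1-|\zeta|^2}.
\end{equation*}
Infimising over $\gamma$, the right-hand side becomes a constant multiple of the hyperbolic distance $d_h(z,w)$, which proves that $\tau_a$ is hyperbolically Lipschitz with constant $L = 2C\|\invL\phi\|_\infty$ (up to the normalising factor in the definition of $d_h$).

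For the second assertion, recall that the pseudohyperbolic and hyperbolic metrics are equivalent on compact subsets of $[0,1)$: there is a constant $d_R$, depending only on $R$, with $d_h(z,a) \le d_R$ whenever $\psi(z,a) \le R$. Combining the Lipschitz bound with Lemma~\ref{lem:harmonic}\,(b), for $z \in D(a,R)$,
\begin{equation*}
    \tau_a(z) \le \tau_a(a) + L\, d_h(z,a) \le C\|\invL\phi\|_\infty + 2C d_R \|\invL\phi\|_\infty = C_R \|\invL\phi\|_\infty,
\end{equation*}
with $C_R = C(1 + 2d_R)$, giving the claimed pointwise bound.

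There is no real obstacle here; the only step demanding any care is the conversion from the $\dbar$-estimate to a hyperbolically Lipschitz estimate, which rests on the simple identity $|\grad u|=2|\dbar u|$ for real $u$ together with the definition of the hyperbolic arclength. The rest is the standard equivalence of the two invariant metrics on pseudohyperbolic disks of fixed radius.
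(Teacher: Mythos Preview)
Your argument is correct and is exactly the intended one: the paper does not give a separate proof but simply remarks that the gradient inequality~\ref{eq:grad} implies the lemma, and you have written out precisely those details (convert the $\dbar$-bound to a full gradient bound for the real function $\tau_a$, integrate against hyperbolic arclength, then use part~(b) and the bounded hyperbolic diameter of $D(a,R)$).
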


Note that Lemma~\ref{lem:harmonic} allows us to write the norm of a
function in $A^{p,\a}_\phi$ as follows, where we let $H(z)$ be an
analytic function in $\ID$ with $\re H(z) = h_0(z)$
\begin{equation*}
  \int_\ID \av{ f(z) e^{-H(z)} e^{-\phi(z) + h_0(z)}}^p (1 -
  |z|^2)^{\a p - 1} \,dA(z)
\end{equation*}
The exponent $-\phi(z) + h_0(z)$ is negative, so that exponential is
bounded. Moreover, the function $(1 - |z|^2)^{\a p - 1}$ is integrable. Thus
$A^{p,\a}_\phi$ contains all bounded multiples of $\exp(H(z))$ and
so is certainly a nontrivial space.

It is easy to see that these transformations of $\phi$ (adding a
multiple of $\log(1-|z|^2)$ and subtracting the harmonic function $h_0$)
convert the original interpolation problem into an equivalent one. Thus,
it is without loss of generality that we can assmue $\phi$ already has
the properties of $\phi - h_0$ in the above lemma. Therefore, the rest
of this paper will be concerned with the following reduction of the
interpolation problem.

The function $\phi$ is \emph{positive} and subharmonic, and there exist
constants $m, M$ such that $0 < m \le \invL \phi(z) < M < \infty$ for
all $x\in\ID$. Moreover $(1 - |z|^2)\dbar \phi(z)$ is bounded. Let $\I =
\{ (G_k, Z_k), k = 1,2,3,\dots \}$ be an interpolation scheme and let $p >
0$ and $\a > 0$. For a coset $w_k \in \H(G_k)/\N_k$ define its norm
$\| w_k \|$ by
\begin{equation*}
  \| w_k \|^p = \inf\left\{ \int_{G_k} \av{ g(z) e^{-\phi(z)}}^p (1 -
  |z|^2)^{\a p - 1} \,dA(z) \st g\in w_k \right\}
\end{equation*}
Given a sequence of cosets $(w_k)$ satisfying $\sum_k \| w_k \|^p <
\infty$, the interpolation problem is to find $f \in A^{p,\a}_\phi$ such
that $f|_{G_k} \in w_k$, (i.e., \term{$f$ inerpolates $(w_k)$}). The
sequence $\Z = \Union Z_k$ is called an \term{interpolating sequence for
$A^{p,\a}_\phi$} if every such interpolation problem has a solution.

\section{Properties of interpolating sequences}\label{sec:properties}

Here we present several properties of interpolating sequences. These are
the same as the corresponding results in \cite{Lue04b} and the proofs
are, for the most part, the same. Therefore I will only indicate how a
proof differs in those cases where it does.

The first is that interpolating sequences are zero sequences. We use
$\Z(f)$ to denote the multiset (set with multiplicity) of zeros of $f$.

\begin{proposition}\label{thm:zeroset}
Given an interpolation scheme $\I$ with domains $G_k$~and clusters
$\Z_k$, if $\Z = \Union_k \Z_k$ is an interpolating sequence for
$A^{p,\a}_\phi$, then there is a function $f \in A^{p,\a}_\phi$ such that
$\Z(f) = \Z$.
\end{proposition}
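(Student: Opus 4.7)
The interpolation operator $\Phi\colon A^{p,\alpha}_\phi\to X_\I$ given by $\Phi(f)=(f|_{G_k}+\N_k)_k$ is linear and, by hypothesis, surjective; its kernel $M=\ker\Phi$ consists exactly of the $f\in A^{p,\alpha}_\phi$ that vanish on $\Z$ to at least the prescribed multiplicities. The plan is therefore to exhibit $f\in M$ whose zero multiset is precisely $\Z$.

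The key step is to show, for each $\zeta\in\ID\setminus\Z$, that there exists $f_\zeta\in M$ with $f_\zeta(\zeta)\ne 0$ (and analogously, for each $z\in Z_k$ with prescribed multiplicity $m_z$, an $f\in M$ with $f^{(m_z)}(z)\ne 0$, so that $f$ does not over-vanish at $z$). I would establish this by augmenting $\I$ with the singleton cluster $\{\zeta\}$ inside a small pseudohyperbolic disk $G_\zeta$ disjoint from the existing $Z_j$; the augmented coset space is naturally $X_\I\oplus\mathbb{C}$. Since augmenting by a single point is a finite-rank perturbation, one can solve the augmented interpolation problem by first solving the original one (call the solution $g$) and then correcting the value $g(\zeta)$ using a smooth cutoff bump supported near $\zeta$, rendered holomorphic by a $\dbar$-correction against the weight $\phi$ in the spirit of Lemma~\ref{lem:harmonic} and \cite{Lue04b}. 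Interpolating the augmented data that is zero on $\I$ and equal to $1$ at $\zeta$ then yields the desired $f_\zeta\in M$.

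Given the key step, a Baire category argument in the (quasi-)Banach space $M$ finishes the proof. Choose a countable collection of closed pseudohyperbolic balls $\{\overline{B_n}\}$, each disjoint from $\Z$, whose interiors cover $\ID\setminus\Z$. For each $n$ the set $A_n=\{f\in M:f\text{ has a zero in }\overline{B_n}\}$ is closed in $M$ by uniform convergence on compacts, and by the key step $A_n\ne M$, so $A_n$ is nowhere dense. Baire category produces $f\in M\setminus\bigcup_n A_n$, and this $f$ has no zero off $\Z$. Enlarging the collection to include the closed proper subspaces $\{f\in M:f^{(m_z)}(z)=0\}$ for each $z\in\Z$ ensures also that the multiplicities at $\Z$ are exact, yielding $\Z(f)=\Z$.

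The main obstacle is the key step: showing that a one-point augmentation of an interpolating sequence remains interpolating, without circular appeal to the density characterization (which is the paper's main goal). This will require the explicit $\dbar$-correction flowing from Lemma~\ref{lem:harmonic} together with the boundedness of point evaluation on $A^{p,\alpha}_\phi$, exactly paralleling the analogous step in \cite{Lue04b}.
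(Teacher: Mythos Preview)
The Baire category step contains a genuine error. You assert that $A_n=\{f\in M:f\text{ has a zero in }\overline{B_n}\}$ is nowhere dense because it is closed and proper in $M$. That inference is valid for closed \emph{linear subspaces} (which is why your multiplicity sets $\{f\in M:f^{(m_z)}(z)=0\}$ are fine), but $A_n$ is not a subspace, and in fact it has nonempty interior whenever $M\ne\{0\}$. Indeed, pick any nonzero $h\in M$ and any $z_0$ in the interior of $B_n$; then $h(z)\cdot\frac{z_0-z}{1-\bar z_0 z}$ lies in $M$, is not identically zero, and vanishes at $z_0$. By Rouch\'e's theorem every sufficiently small perturbation of this function in $M$ still has a zero inside $B_n$, so this function is an interior point of $A_n$. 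Thus $\bigcup_n A_n$ can (and typically will) be all of $M\setminus\{0\}$, and Baire yields nothing. Replacing the $A_n$ by the hyperplanes $\{f\in M:f(\zeta_n)=0\}$ for a countable dense set $\{\zeta_n\}\subset\ID\setminus\Z$ does not help either: the resulting $f$ is nonzero on $\{\zeta_n\}$ but may still vanish elsewhere off $\Z$.

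The paper's route is different and avoids this difficulty. One first interpolates data that are zero on all clusters but one to obtain a nonzero $g\in A^{p,\alpha}_\phi$ vanishing on all but finitely many points of $\Z$; multiplying by a finite Blaschke product gives a nonzero $f\in A^{p,\alpha}_\phi$ with $\Z\subset\Z(f)$. The passage from ``$\Z$ is contained in some zero set'' to ``$\Z$ is exactly a zero set'' is precisely the statement that a subsequence of an $A^{p,\alpha}_\phi$-zero sequence is again a zero sequence. This is not a soft Baire fact; it relies on the explicit structure of zero sets in these weighted spaces established in \cite{Lue96}, which is what the paper invokes. Your key step and its proposed $\dbar$-correction are therefore not needed here, and in any case they lean on the add-a-point proposition and the $\dbar$-machinery that appear only later in the paper.
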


The only thing we need that is different from the proof in \cite{Lue04b}
is a different reference for the fact that a subsequence of an
$A^{p,\a}_\phi$-zero sequence is also an $A^{p,\a}_\phi$-zero
sequence. This follows from \cite{Lue96}, especially section~5 where
weighted spaces of the type considered here are covered.

\begin{theorem}\label{thm:lowerbound}
Given an interpolation scheme $\I$ with clusters $\Z_k$, if $\Z =
\Union_k\Z_k$ is an interpolating sequence for $A^{p,\a}_\phi$ then
there is a lower bound $\delta > 0$ on the pseudohyperbolic distance
between different clusters of $\I$.
\end{theorem}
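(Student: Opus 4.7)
The plan is a contradiction argument combining the open mapping theorem for the interpolation solution operator with a Montel/normal family argument after M\"obius transplantation.

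The first ingredient is that $\Phi\colon A^{p,\alpha}_\phi\to X_\I$ is bounded and has a bounded right inverse. Boundedness follows from the closed graph theorem, since $A^{p,\alpha}_\phi$-convergence implies uniform convergence on compact subsets of $\ID$ via the weighted pointwise estimate $|f(w)|e^{-\phi(w)}\le C(1-|w|^2)^{-\alpha-1/p}\|f\|$ (derived from Lemma~\ref{lem:harmonic} centered at $w$: $|f|^pe^{-ph_w}$ is the modulus-to-the-$p$ of an analytic function, so subharmonic, then apply sub-mean value on a fixed pseudohyperbolic disk and absorb the bounded $\tau_w$). In this F-space setting ($p<1$ quasi-Banach, $p\ge 1$ Banach), surjectivity of $\Phi$ plus the open mapping theorem yield a constant $C$ such that every $w\in X_\I$ equals $\Phi(f)$ for some $f$ with $\|f\|\le C\|w\|_{X_\I}$.

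Now suppose the conclusion fails, extract indices $k_n\ne k'_n$ with $z_n\in Z_{k_n}$ and $\zeta_n\in Z_{k'_n}$ such that $\psi(z_n,\zeta_n)\to 0$, and feed the right inverse the data $(w^{(n)}_k)$ that is the zero coset when $k\ne k_n$ and, at $k=k_n$, is the coset of the constant function with value $c_n:=e^{\phi(z_n)}(1-|z_n|^2)^{-\alpha-1/p}$. Using the hyperbolic Lipschitz bound on $\phi$ (from the reduction that $(1-|z|^2)\dbar\phi$ is bounded) together with the pseudohyperbolic diameter of $G_{k_n}$ being at most $R$, the coset norm satisfies $\|w^{(n)}_{k_n}\|\le C_0$ uniformly in $n$. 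The right inverse thus produces $f_n\in A^{p,\alpha}_\phi$ with $\|f_n\|\le C'$, $f_n(z_n)=c_n$, and $f_n(\zeta_n)=0$.

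To close, transplant by the involutive M\"obius map $\varphi_n(u):=(z_n-u)/(1-\bar z_n u)$ and normalize: $\hat f_n(u):=f_n(\varphi_n(u))/c_n$, so that $\hat f_n(0)=1$ while $\hat f_n(\eta_n)=0$ with $|\eta_n|=\psi(z_n,\zeta_n)\to 0$. The crucial step is the uniform bound $\sup_{|u|\le r}|\hat f_n(u)|\le C_r$ for every $r<1$: apply the weighted pointwise estimate to $f_n(\varphi_n(u))$, use the hyperbolic Lipschitz bound on $\phi$ to control $|\phi(\varphi_n(u))-\phi(z_n)|\le Kd_{\mathrm{hyp}}(u,0)$, and use the M\"obius identity $1-|\varphi_n(u)|^2=(1-|z_n|^2)(1-|u|^2)/|1-\bar z_n u|^2$ to pair the two $(1-|\cdot|^2)^{-\alpha-1/p}$ factors into $(|1-\bar z_n u|^2/(1-|u|^2))^{\alpha+1/p}$, which is bounded on $|u|\le r$. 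Montel's theorem then yields an analytic subsequential limit $g$ on $\ID$ with $g(0)=\lim\hat f_n(0)=1$ yet $g(0)=\lim\hat f_n(\eta_n)=0$, the desired contradiction. The main obstacle is precisely this uniform local bound for $\hat f_n$: the exponential weight $e^{-\phi}$ is distorted under transplantation, and taming it requires all three hypotheses on $\phi$ (subharmonic, $\invL\phi$ bounded, $(1-|z|^2)\dbar\phi$ bounded); once this is secured the rest is soft.
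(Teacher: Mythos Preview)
Your overall strategy---interpolate unit-sized data at one cluster and zero at a nearby one, transplant by a M\"obius map, and extract a Montel limit to force $g(0)=1=0$---is sound, but there is a circularity in the first step. You invoke the closed graph theorem for $\Phi\colon A^{p,\alpha}_\phi\to X_\I$; for that to make sense you need $\Phi$ to land in $X_\I$, i.e., $\sum_k\|f|_{G_k}+\N_k\|^p<\infty$ for every $f$. The paper obtains this from bounded overlap of the $G_k$, which is a \emph{consequence} of the very theorem you are proving (without separation of the clusters, infinitely many $G_k$ can share a fixed subdisk). The fix is the one the paper uses just after this theorem: apply closed graph instead to the map $X_\I\to A^{p,\alpha}_\phi/I_\Z$ sending $(w_k)$ to the coset of interpolating functions. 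That map is well defined by the interpolation hypothesis, has closed graph, and its boundedness gives the interpolation constant---which is the only thing your argument actually uses. You never need $\Phi$ itself to be bounded.

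With that patch, your proof is correct but follows a genuinely different route from the paper's. The paper does not transplant or invoke Montel; it upgrades the standard derivative estimate to a weighted one,
\[
\bigl|f'(z)(1-|z|^2)e^{-\phi(z)}\bigr|^p(1-|z|^2)^{\alpha p-1}\le \frac{C_r}{|D(z,r)|}\int_{D(z,r)}\bigl|f(w)e^{-\phi(w)}\bigr|^p(1-|w|^2)^{\alpha p-1}\,dA(w),
\]
via Lemma~\ref{lem:gradient}, and then runs the argument of \cite{Lue04b} verbatim. That approach is quantitative: it yields an explicit lower bound on $\delta$ in terms of the interpolation constant and the parameters $R,\alpha,p,\|\invL\phi\|_\infty$. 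Your compactness argument is softer and, by itself, gives no effective $\delta$, but it is conceptually clean and needs only the zeroth-order pointwise estimate rather than the first-order one.
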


The proof in \cite{Lue04b} makes use of the following inewuality
\begin{equation*}
   |f'(z)(1 - |z|^2)|^p \le  \frac{C_r}{|D(z,r)|} \int_{D(z,r)} |f(w)|^p
   \,dA(w)
\end{equation*}
From this we can deduce that
\begin{equation*}
   \av{f'(z)(1 - |z|^2) e^{-\phi(z)}}^p (1-|z|^2)^{\a p - 1} \le
   \frac{C_r}{|D(z,r)|} \int_{D(z,r)} \av{f(w) e^{-\phi(w)}}^p (1-|w|^2)^{\a p - 1}
   \,dA(w)
\end{equation*}
using the inequality of Lemma~\ref{lem:gradient}. After that,
the proof is the same.

I should add that a similar inequality for $f(z)$:
\begin{equation*}
   \av{f(z) e^{-\phi(z)}}^p (1-|z|^2)^{\a p - 1} \le
   \frac{C_r}{|D(z,r)|} \int_{D(z,r)} \av{ f(w) e^{-\phi(w)} }^p
   (1-|w|^2)^{\a p - 1} \,dA(w)
\end{equation*}
shows that the unit ball of $A^{p,\a}_\phi$ is a normal family and
therefore these spaces are complete.

In \cite{Lue04b}, part of the definition of an interpolating sequence
was that the interpolation operator was bounded. We have not made that
assumption here. Thus we cannot use the open mapping principle to obtain
an \term{interpolation constant}. We nevertheless obtain one as follows.

Let $I_\Z$ consist of all functions in $A^{p,\a}_\phi$ that vanish on
$\Z$ with at least the given multiplicities. Then for any interpolation
scheme $\I$ for which $\Z$ is an interpolating sequence, there is a map
from $X_\I$ to the quotient space $A^{p,\a}_\phi/I_\Z$ taking a sequence
of cosets $(w_k)$ to the coset of functions that interpolate it. It is
straightforward to see that this mapping has closed graph and, since
both $X_\I$ and $A^{p,\a}_\phi/I_\Z$ are complete, it is continuous. If
$K$ is the norm of this mapping, then every sequence $w \in X_\I$ is
interpolated by a coset in $A^{p,\a}_\phi/I_\Z$ with quotient norm at
most $K\| w \|$. By a normal families argument, we can select a
representative function (i.e., an element of the same coset) also with
norm at most $K\| w \|$. The minimal $K$ for which this is satisfied is
called the \term{interpolation constant} for $\Z$ (relative to the
scheme $\I$).

Theorem~\ref{thm:lowerbound} implies that if $\Z$ is interpolating
relative to an interpolation scheme, then the sets $G_k$ have bounded
overlap. That is, for some constant $M$ we have $\sum \chi_{G_k}(z) \le
M$ for all $z\in \ID$. (See \cite{Lue04b} for the details.) Now, every
function $f \in A^{p,\a}_\phi$ defines a sequence of cosets $(w_k)$,
where $w_k$ is the coset determined by $f|_{G_k}$. We can therefore
estimate the norm of each coset by
\begin{equation*}
  \| w_k \|^p \le \int_{G_k} \av{f(z) e^{-\phi(z)}}^p (1 - |z|^2)^{\a p - 1}
  \,dA(z)
\end{equation*}
Summing these and using the bounded overlap, we get
\begin{equation*}
  \sum \| w_k \|^p \le M \int_{\ID} \av{f(z) e^{-\phi(z)}}^p (1 - |z|^2)^{\a p - 1}
    \,dA(z)
\end{equation*}
That is, $(w_k) \in X_\I$. Thus, for the mapping $\Phi$ taking each $f$
to its sequence of cosets we have not only $X_\I \subset
\Phi(A^{p,\a}_\phi)$, but also $\Phi(A^{p,\a}_\phi)\subset X_\I$
and $\Phi$ is bounded.

If $\I$ and $\I'$ are interpolation schemes, we will say that $\I'$ is a
\term{subscheme} of $\I$ if for each pair $(G_k',Z_k')$ of $\I'$ there
exists a pair $(G_k,Z_k)$ of $\I$ such that $G_k' = G_k$ and $Z_k'$ is a
subset (with multiplicity) of $Z_k$.

\begin{proposition}
If $\Z = \Union Z_k$ is an interpolating sequence for $A^{p,\a}_\phi$
relative to the interpolation scheme $\I = \{ (G_k, Z_k), k = 1, 2,
3,\dots \}$ and if $\I' = \{ (G_k',Z_k'), k = 1,2,3,\dots \}$ is a
subscheme, then $\Z' = \Union Z_k'$ is an interpolating sequence for
$A^{p,\a}_\phi$ relative to $\I'$. The interpolation constant for $\I'$
is less than or equal to the constant for $\I$
\end{proposition}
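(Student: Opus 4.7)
The plan is direct: given any sequence $(w_k') \in X_{\I'}$, I would construct a corresponding sequence $(w_k) \in X_\I$ whose interpolants automatically interpolate $(w_k')$. Writing $\N_k' \subset \H(G_k)$ for the functions vanishing on $Z_k'$ with the given multiplicities, the crucial observation is that since $Z_k' \subset Z_k$ as multisets, any function vanishing on $Z_k$ with the prescribed multiplicities must also vanish on $Z_k'$ with its prescribed multiplicities; hence $\N_k \subset \N_k'$, and each $\N_k'$-coset in $\H(G_k)$ decomposes as a disjoint union of $\N_k$-cosets.

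Now, given $(w_k') \in X_{\I'}$, fix $\delta > 0$ and choose for each $k$ a representative $g_k \in w_k'$ satisfying
\begin{equation*}
  \int_{G_k} \av{g_k(z) e^{-\phi(z)}}^p (1-|z|^2)^{\a p - 1}\,dA(z) \le \| w_k' \|^p + 2^{-k}\delta.
\end{equation*}
Let $w_k$ be the $\N_k$-coset $g_k + \N_k$. Because $\N_k \subset \N_k'$ we have $w_k \subset w_k'$, and the quotient norm obeys $\| w_k \|^p \le \| w_k' \|^p + 2^{-k}\delta$. Summing,
\begin{equation*}
  \sum_k \| w_k \|^p \le \|(w_k')\|^p + \delta,
\end{equation*}
so $(w_k) \in X_\I$. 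Applying the interpolation constant $K$ for $\I$ established in the discussion preceding the proposition, there exists $f \in A^{p,\a}_\phi$ with $f|_{G_k} \in w_k$ for every $k$ and
\begin{equation*}
  \| f \|_{p,\phi,\a} \le K \bigl( \|(w_k')\|^p + \delta \bigr)^{1/p}.
\end{equation*}
Since $w_k \subset w_k'$, the same $f$ interpolates $(w_k')$ relative to $\I'$.

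Letting $\delta \to 0$, the infimum of $\| f \|_{p,\phi,\a}$ over interpolants constructed this way is at most $K\,\|(w_k')\|$, so $\Z'$ is an interpolating sequence for $A^{p,\a}_\phi$ relative to $\I'$ and its interpolation constant is no larger than $K$. There is no real obstacle: the only subtlety is to take the perturbations summable (for instance the geometric choice $2^{-k}\delta$) so that the lifted sequence actually lies in $X_\I$ rather than merely having each term finite.
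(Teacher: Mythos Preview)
Your argument is correct and is the natural direct proof; the paper itself does not spell it out but defers to \cite{Lue04b}, where the same lifting idea (choose near-minimal representatives of the $\N_k'$-cosets, regard them as $\N_k$-cosets via $\N_k\subset\N_k'$, interpolate in $\I$, observe that the interpolant solves the $\I'$-problem) is used. One small bookkeeping point you gloss over: the definition of subscheme only asserts that each pair of $\I'$ matches \emph{some} pair of $\I$, so the correspondence of indices need not be the identity nor surjective; for indices of $\I$ with no $\I'$-counterpart you should set $w_k=\N_k$ (the zero coset), and you should allow a relabeling so that the matched pairs line up. Neither affects the estimates.
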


The proof is the same as in \cite{Lue04b}.

Invariance under M\"obius transformations is just slightly more
involved, because composition will also change the function $\phi$.
However, the new function will satisfy the same conditions as $\phi$. We
will normally want, after composition, the new weight to remain bounded
above and also satisfy a uniform lower bound at $0$. Thus, given a point
$a \in \ID$, let  $M_a(z) = (a-z)/(1-\bar a z)$, a M\"obius
transformation that maps $a$ to $0$ and is its own inverse. Given a
space $A^{p,\a}_\phi$, let $\phi_a(z) = \phi(M_a(z)) - h_a(M_a(z))$,
where $h_a$ is the harmonic function of Lemma~\ref{lem:harmonic}.

\begin{proposition}
Let $\I$ be an interpolation scheme with clusters $Z_k$ and domains
$G_k$. If $\Z=\Union Z_k$ is interpolating for $A^{p,\a}_\phi$ with
respect to $\I$ and $a\in \ID$, then $M_a(\Z)$ is interpolating for
$A^{p,\a}_{\phi_a}$ relative to the scheme $M_a(\I)$ which has
clusters $M_a(Z_k)$ and domains $M_a(G_k)$. Moreover, the
interpolation constants are the same.
\end{proposition}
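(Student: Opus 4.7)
The plan is to construct an isometric bijection $U_a\colon A^{p,\a}_\phi\to A^{p,\a}_{\phi_a}$ of weighted-composition type, together with a parallel isometric bijection $\tilde U_a\colon X_\I\to X_{M_a(\I)}$ at the level of coset sequences, arranged so that $\tilde U_a$ intertwines the two interpolation operators. Because both maps are bijective isometries and the resulting square commutes, the interpolation problem for $M_a(\I)$ in $A^{p,\a}_{\phi_a}$ becomes formally the same as the one for $\I$ in $A^{p,\a}_\phi$, and the interpolation constants must coincide.

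First I would pick $H_a$ analytic on $\ID$ with $\re H_a=h_a$ (available since $\ID$ is simply connected) and a holomorphic branch of $(1-\bar a z)^{-2(\a+1/p)}$ on $\ID$ (available since $1-\bar a z$ does not vanish), then set
\[
    \lambda_a(z)=(1-|a|^2)^{\a+1/p}(1-\bar a z)^{-2(\a+1/p)} e^{-H_a(M_a(z))},
    \qquad (U_af)(z)=\lambda_a(z)\,f(M_a(z)).
\]
Using the standard identities $1-|M_a(z)|^2=(1-|z|^2)|M_a'(z)|$ and $|M_a'(z)|=(1-|a|^2)/|1-\bar a z|^2$, the substitution $w=M_a(z)$ should give
\[
    |\lambda_a(z)|^p e^{-p\phi_a(z)}(1-|z|^2)^{\a p-1}\,dA(z) = e^{-p\phi(w)}(1-|w|^2)^{\a p-1}\,dA(w),
\]
so $U_a$ is an isometry from $A^{p,\a}_\phi$ into $A^{p,\a}_{\phi_a}$; the non-vanishing of $\lambda_a$ makes it a bijection with inverse $g\mapsto (g/\lambda_a)\circ M_a$. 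Beforehand I would check that $\phi_a$ satisfies the standing hypotheses: $\invL\phi_a(z)=(\invL\phi)(M_a(z))$ because $h_a\circ M_a$ is harmonic and $\invL$ is invariant under biholomorphisms, while Lemma \ref{lem:harmonic} delivers $\phi_a\ge 0$ and the boundedness of $(1-|z|^2)\dbar\phi_a$.

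The same change-of-variable computation, restricted to $G_k$ and $M_a(G_k)$, shows that $h\mapsto \lambda_a\cdot(h\circ M_a)$ is a bijection $\H(G_k)\to\H(M_a(G_k))$ sending $\N_k$ onto the corresponding ideal $\N_k'$ of elements of $\H(M_a(G_k))$ vanishing on $M_a(Z_k)$, and descending to an isometry $\tilde U_{a,k}$ of the local quotient norms. Patching over $k$ gives an isometric bijection $\tilde U_a\colon X_\I\to X_{M_a(\I)}$ with the property that $(U_af)|_{M_a(G_k)}$ lies in $\tilde U_{a,k}(f|_{G_k}+\N_k)$ for every $f$. To verify interpolation, I would take $(w'_k)\in X_{M_a(\I)}$, pull back to $(u_k)=\tilde U_a^{-1}(w'_k)\in X_\I$ with the same norm, apply the hypothesis to obtain $f\in A^{p,\a}_\phi$ with $f|_{G_k}\in u_k$ and $\|f\|\le K\|(u_k)\|$, and observe that $U_af$ interpolates $(w'_k)$ with $\|U_af\|=\|f\|\le K\|(w'_k)\|$. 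Running the same construction with $U_a^{-1}$ gives the reverse inequality, yielding equality of the two interpolation constants.

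The step that requires the most care is the bookkeeping in the change of variable: the weight $(1-|z|^2)^{\a p-1}\,dA(z)$ contributes a Jacobian factor $|M_a'|^{\a p+1}$ which must be cancelled exactly by $|\lambda_a|^p$ in combination with the harmonic correction $e^{-ph_a(M_a(z))}$. That exact cancellation is what forces the fractional exponent $\a+1/p$ in the definition of $\lambda_a$, and in turn forces us to rely on the simple connectedness of $\ID$ to select a single-valued analytic branch of $(1-\bar a z)^{-2(\a+1/p)}$.
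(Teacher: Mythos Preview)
Your proposal is correct and follows essentially the same approach as the paper: the paper's isometry is $\Phi_a f = (fe^{-H_a})\circ M_a\cdot (M_a')^{\a+1/p}$, which is your $U_a$ up to a unimodular constant once one expands $(M_a')^{\a+1/p}$ explicitly, and the rest of the argument (descent to cosets, isometry of the induced map on $X_\I$, transfer of solutions) is identical. Your write-up simply supplies more of the bookkeeping---the Jacobian cancellation and the verification that $\phi_a$ meets the standing hypotheses---that the paper leaves implicit.
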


\begin{proof}
The map $\Phi_a f = (fe^{-H_a}) \circ M_a (M_a')^{\a + 1/p}$ (where
$H_a$ is chosen with $\re H_a = h_a$ and say $\im H_a(a) = 0$) is an
isometry from $A^{p,\a}_\phi$ to $A^{p,\a}_{\phi_a}$. It maps the $\N_k$
associated with $Z_k$ to the $\N_k'$ associated with $Z_k'$ and
therefore maps a coset $w_k$ of $\N_k$ to a coset $w_k'$ of $\N_k'$.
Moreover, the mapping of cosets is isometric. Thus, $\Phi_a$ converts
any interpolation problem for $A^{p,\a}_{\phi_a}$ to an isometric
problem for $A^{p,\a}_{\phi}$ and the inverse converts its solution to
an isometric solution.
\end{proof}

One key requirement of an interpolating sequence is that adding a single
point to it produces an interpolating sequence (for an appropriately
augmented scheme), with a suitable estimate on the new interpolation
constant.

\begin{proposition}
Let $\I$ be an interpolation scheme with clusters $Z_k$ and domains
$G_k$. Suppose $\Z = \Union Z_k$ is an interpolating sequence for
$A^{p,\a}_\phi$ relative to $\I$ and let $z_0 \in \ID$. Suppose
there is an $\eps > 0$ such that  $\psi(z_0,Z_k) > \eps$ for every $k$.
Define a new scheme $\J$ whose domains are all the domains of $\I$ plus
the domain $G_0 = D(z_0, 1/2)$ and whose clusters $W_k$ are all the
$Z_k$ plus $W_0 = \{ z_0 \}$. Then
$\W = \{ z_0 \} \union \Z$ is an interpolating sequence relative
to the scheme $\J$.

If $K$ is the interpolation constant for $\Z$ then the constant for $\W$
is at most $C K/\eps$, where $C$ is a positive constant that depends
only on the space $A^{p,\a}_\phi$.
\end{proposition}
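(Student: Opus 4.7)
The plan is to reduce to the case $z_0 = 0$ via the preceding M\"obius-invariance proposition, solve the ``old'' interpolation problem for $(v_k)_{k\ge 1}$ using the interpolating property of $\Z$, and then correct the value at the origin by a function vanishing on $\Z$. After the change of variable $M_{z_0}$ the weight becomes $\phi_{z_0}$ (which satisfies the same hypotheses with the same constants), the cluster $W_0=\{0\}$ sits inside $G_0=D(0,1/2)$, and the coset $v_0$ is represented by a single complex value $c_0$, with $\|v_0\|$ comparable to $|c_0|e^{-\phi(0)}$ by Lemma~\ref{lem:gradient} (since both $\phi$ and $1-|z|^2$ vary by bounded ratios on $G_0$).

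Given $(v_0,(v_k)_{k\ge 1})\in X_\J$, I first produce $g\in A^{p,\a}_\phi$ with $g|_{G_k}\in v_k$ for $k\ge 1$ and $\|g\|\le K\|(v_k)_{k\ge 1}\|_{X_\I}$ via the interpolating property of $\Z$ for $\I$. Writing $f=g+h$, the full interpolation requirement becomes: $h\in A^{p,\a}_\phi$ vanishes on $\Z$ with the given multiplicities, and $h(0) = c_0 - g(0)$. Using the pointwise submean inequality recorded after Theorem~\ref{thm:lowerbound} I obtain $|g(0)|e^{-\phi(0)}\le C\|g\|$, while inserting a constant representative of $v_0$ on $G_0$ gives $|c_0|e^{-\phi(0)}\le C\|v_0\|$; hence $|c_0-g(0)|e^{-\phi(0)}\le CK\|(v_0,(v_k))\|_{X_\J}$.

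For the correction $h$ I invoke Proposition~\ref{thm:zeroset} to obtain $F\in A^{p,\a}_\phi$ with $\Z(F)=\Z$; since $0\notin \Z$ we have $F(0)\ne 0$, and I set $h = \bigl((c_0-g(0))/F(0)\bigr)F$. Then $h$ lies in the kernel of the interpolation operator and satisfies $h(0) = c_0 - g(0)$, so $f = g + h$ interpolates the full datum. Moreover $\|h\| = |c_0-g(0)|\,\|F\|/|F(0)|$, so combining with the preceding estimate the whole bound reduces to the extremal inequality
\begin{equation*}
    \|F\|/\bigl(|F(0)|\,e^{-\phi(0)}\bigr) \le C/\eps.
\end{equation*}
This ratio bound is the main obstacle: Proposition~\ref{thm:zeroset} only asserts existence of the zero function, without quantitative control on its value at a point which is $\eps$-away from $\Z$. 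One must therefore sharpen the construction of $F$ (for instance, by selecting a near-extremal member of $I_\Z=\ker\Phi$ normalized at $0$, or by building a local Blaschke-type factor from the cluster points closest to $0$) to convert the separation hypothesis $\psi(0,\Z)>\eps$ into the linear-in-$\eps$ lower bound on $|F(0)|e^{-\phi(0)}/\|F\|$. Once that ratio bound is established, assembling $f = g + h$ yields $\|f\|\le (CK/\eps)\|(v_0,(v_k))\|_{X_\J}$, showing that $\W$ is interpolating for $\J$ with constant at most $CK/\eps$.
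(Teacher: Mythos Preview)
Your argument has a genuine gap, and it is exactly the one you flag: the ratio bound $\|F\|/\bigl(|F(0)|e^{-\phi(0)}\bigr)\le C/\eps$ for some $F\in I_\Z$. This is not a routine detail. Proposition~\ref{thm:zeroset} gives only existence of a function with $\Z(F)=\Z$, and nothing in the hypotheses prevents every such $F$ from being very small at $0$ relative to its norm; the separation $\psi(0,\Z)>\eps$ rules out $F(0)=0$ but does not by itself produce the linear-in-$\eps$ lower bound you need. Any attempt to manufacture such an $F$ (e.g.\ by solving an extremal problem in $I_\Z$, or by interpolating data that forces vanishing on $\Z$ and a prescribed value at $0$) is at least as hard as the proposition you are trying to prove.

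The paper sidesteps this obstacle by reversing the two steps of your decomposition. After reducing to $z_0=0$, it first subtracts the constant representative $g_0$ of $w_0$ from \emph{all} the data, and then divides by $z$: set $f_k=(g_k-g_0)/z$ on $G_k$ for $k\ge 1$. Because each $Z_k$ lies in $\{\psi(0,\cdot)>\eps\}$, the coset norm of $f_k$ is at most $C\eps^{-1}\bigl(\|w_k\|+|g_0|\,\mu(G_k)^{1/p}\bigr)$, so $(u_k)_{k\ge 1}\in X_\I$ with norm $\le C/\eps$. Interpolating $(u_k)$ by $f\in A^{p,\a}_\phi$ with $\|f\|\le CK/\eps$ and setting $zf(z)+g_0$ solves the full problem. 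The point is that the paper uses the explicit function $z$, which vanishes only at the \emph{new} point $0$ and whose size on $Z_k$ is exactly controlled by $\eps$; your approach instead needs a function vanishing on all of $\Z$, whose value at $0$ is not controlled by the available data.
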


\begin{proof}
Some of the proof in \cite{Lue04b} is simplified by the symmetry of the
weights, so we will have to add a little detail. Without loss of
generality we may assume $z_0 = 0$. Suppose we wish to interpolate a
sequence of cosets $w = (w_k, k = 0,1,2,\dots)$ in $X_\J$, with $\| w \|
= 1$. Choose representative functions $g_k$ of minimal norm for all
$w_k$ with $k \ne 0$. For $k=0$, $w_0$ contains a constant function $g_0$.
This may not be the minimizing representative, but from the inequality
\begin{equation*}
  \av{ f(0) e^{-\phi(0)} }^p \le C \int_{G_0} \av{ f(z) e^{-\phi(z)} }^p
    (1 - |z|^2)^{\a p - 1} \,dA(z)
\end{equation*}
we can estimate $\| w_0 \|$ within a constant factor by using
$g_0$.

Now consider the functions $f_k = (g_k - g_0)/z$ for $k\ne 0$. One
easily estimates
\begin{equation*}
  \int_{G_k} \av{f_k(z) e^{-\phi(z)}}^p (1 - |z|^2)^{\a p - 1} \,dA(z) \le
    \frac{C_p}{\eps^p} \left( \| w_k \|^p + | g_0 |^p \mu(G_k) \right)
\end{equation*}
Where $\mu$ is the measure $e^{-p\phi(z)}(1 - |z|^2)^{\a p - 1} \,dA(z)$.
Therefore the sequence of cosets $(u_k)$ represented by $(f_k)$ belongs
to $X_\I$, having norm at most $C_p^{1/p}(1 + C\mu(\ID))/\eps$ for some
$C$.

Since $\Z$ is interpolating, there exist $f \in A^{p,\a}_\phi$ that
interpolates $(u_k)$ with norm at most $K\| (u_k) \| = CK/\eps$ for some
constant. Then $zf(z) + g_0$ interpolates $(w_k)$ with norm at most
$CK/\eps$ for some other constant $C$.
\end{proof}

We say that $\Z$ has bounded density if for $0<R<1$ there is a finite
constant $N=N_R$ such that every disk $D(a,R)$, $a\in \ID$, contains no
more than $N$ points (counting multiplicity). If there is a finite upper
bound for some $R \in (0,1)$ then there is a finite upper bound for any
$R\in (0,1)$, although the bounds will be different. We will show that
an interpolating sequence relative to a scheme $\I$ must have bounded
density. Given the bounded overlap of the domains and the uniform
separation between clusters, it is enough to show that there is an upper
bound on the number of points in each cluster (counting multiplicity).

\begin{theorem}
If $\Z$ is an interpolating sequence for $A^{p,\a}_{\phi}$ relative
to an interpolation scheme $\I$ then there is a finite upper bound $B$ on the
number of points, counting multiplicity, in each cluster $Z_k$ of $\I$.
\end{theorem}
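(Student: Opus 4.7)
The plan is to argue by contradiction, combining the add-a-point proposition with Jensen's formula. Suppose there is no uniform bound on $|Z_k|$, so that some cluster $Z_K$ has $n$ points (counted with multiplicity) for arbitrarily large $n$. By Theorem~\ref{thm:lowerbound} the clusters of $\I$ are pseudohyperbolically $\delta$-separated for some $\delta>0$, and by the M\"obius invariance proved earlier (applied at a node of $Z_K$) we may assume $0\in Z_K$ and $Z_K\subset D(0,R)$, with $\phi$ retaining the standing reduced hypotheses.

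Set $\eps=\min(\delta,1-R)/(4\sqrt{n})$. Since the pseudohyperbolic area of $D(0,2\sqrt{n}\,\eps)$ is at least $4n\pi\eps^2$ while $(Z_K)_\eps$ is a union of at most $n$ pseudohyperbolic disks of radius $\eps$ and has area at most $n\pi\eps^2/(1-\eps^2)$, we may choose $z_0\in D(0,2\sqrt{n}\,\eps)\setminus (Z_K)_\eps$. Then $\psi(z_0,Z_K)>\eps$, and for any other cluster $Z_j$ the triangle inequality gives $\psi(z_0,Z_j)\ge \delta-2\sqrt{n}\,\eps\ge\delta/2>\eps$. The add-a-point proposition therefore yields that $\{z_0\}\cup\Z$ is interpolating relative to the augmented scheme $\J$, with interpolation constant $K'\le CK/\eps=O(\sqrt{n})$, where $K$ is the constant for $\Z$.

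Solve the interpolation problem in which $w_0$ is the coset at $z_0$ represented by the constant function $1$ and $w_k=0$ for all other $k$. Its norm satisfies $\|w_0\|^p\le \int_\ID (1-|z|^2)^{\a p-1}\,dA(z)=:M_0<\infty$, using $\phi\ge 0$, so there exists $f\in A^{p,\a}_\phi$ with $f(z_0)=1$, with $f$ vanishing on $\Z$ to the prescribed multiplicities, and with $\|f\|\le K'M_0^{1/p}=O(\sqrt{n})$.

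Finally, set $R'':=(1+R)/2$ and $R_1:=(3+R)/4$, so $R<R''<R_1<1$. The triangle inequality gives $\psi(z_0,z)\le 2\sqrt{n}\,\eps+R\le R''$ for every $z\in Z_K$, so $f$ has $n$ zeros in $D(z_0,R'')$. Jensen's formula (pulled back to $0$ via $M_{z_0}$) combined with the pointwise estimate $\sup_{w\in\partial D(z_0,R_1)}|f(w)|\le C_{R_1}\|f\|$ (following from the sub-mean-value inequality stated after Theorem~\ref{thm:lowerbound} together with the local boundedness of $\phi$ from Lemma~\ref{lem:gradient}) yield
\begin{equation*}
  1=|f(z_0)|\le C_{R_1}\|f\|\left(\frac{R''}{R_1}\right)^{\!n}=O\!\left(\sqrt{n}\,\left(\frac{R''}{R_1}\right)^{\!n}\right),
\end{equation*}
which tends to $0$ as $n\to\infty$, a contradiction. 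The main obstacle is calibrating $z_0$: it must avoid $(Z_K)_\eps$ (so the add-a-point proposition applies) yet stay close enough to $Z_K$ that all $n$ zeros lie in a pseudohyperbolic disk around $z_0$ of radius bounded away from $1$; the area comparison with $\eps\sim 1/\sqrt{n}$ accomplishes this at the cost of $K'\sim\sqrt{n}$, which is dominated by the exponential decay of $(R''/R_1)^n$.
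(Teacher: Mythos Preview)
Your argument is correct. The overall architecture---M\"obius-transform a large cluster to sit near the origin, use the add-a-point proposition to produce $f\in A^{p,\a}_{\phi}$ with $f(z_0)=1$ that vanishes on the cluster, then derive a contradiction from the many zeros near $z_0$---matches the paper's approach. Two small imprecisions are harmless: the pseudohyperbolic metric does not satisfy the ordinary triangle inequality, but the strong form $\psi(a,c)\le(\psi(a,b)+\psi(b,c))/(1+\psi(a,b)\psi(b,c))$ still yields $\psi(z_0,Z_j)\ge\delta/2$ and $\psi(z_0,Z_K)\le R''$ under your choice of $\eps$; and the uniformity of the constant $C$ in the add-a-point proposition across the M\"obius-transformed weights $\phi_a$ follows from the uniform control on $\phi_a(0)$ and $\|\invL\phi_a\|_\infty$ provided by Lemmas~\ref{lem:harmonic} and~\ref{lem:gradient}.

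Where you diverge from the paper is in the final step. The paper (following \cite{Lue04b}) extracts a normal-families limit: one obtains a sequence of norm-bounded functions $f_n$ vanishing on clusters of size $n\to\infty$ near the origin, passes to a locally uniform limit $f$, and argues that $f$ must vanish to infinite order (hence be identically zero) while still satisfying $f(z_0)=1$. This requires the uniform bounds on $\phi_a$ on compact sets so that the pointwise estimates, and hence normality, are uniform across the transformed spaces. Your route replaces this compactness argument with a direct quantitative estimate via Jensen's formula: the $n$ zeros in $D(z_0,R'')$ force $|f(z_0)|\le C\|f\|(R''/R_1)^n$, and the exponential decay beats the $O(\sqrt n)$ growth of $\|f\|$. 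Your approach is more elementary and gives an effective bound on $B$ in terms of $K$, $R$, $\delta$, and $\|\invL\phi\|_\infty$; the normal-families argument is softer but less explicit.
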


The proof is the same as in \cite{Lue04b} except we use M\"obius
transformations $M_a$ to map $A^{p,\a}_{\phi}$ to $A^{p,\a}_{\phi_a}$ as
before. It is important that there is a lower bound on $\phi_a(0)$
independent of $a$. This means there is also a lower bound on $\phi_a$
on compact sets, allowing the normal families argument to proceed.

As in \cite{Lue04b}, we now have two additional conditions that the
scheme $\I$ must satisfy in order for the sequence $\Z = \Union Z_k$ to
be interpolating, and we call such schemes admissible.

Summarizing, we have defined $\I = \{ (G_k,Z_k), k=1,2,3,\dots \}$ to be
an interpolation scheme if it satisfies properties P1 and P2 below. We
will say $\I$ is an \term{admissible interpolation scheme} if it also
satisfies P3 and P4:

\begin{itemize}
\item[(P1)] There is an $R < 1$ such that the pseudohyperbolic diameter
     of each $G_k$ is at most $R$.
\item[(P2)] There is an $\epsilon > 0$ such that $(\Z_k)_\epsilon
    \subset G_k$ for every $k$.
\item[(P3)] There is a $\delta > 0$ such that for all $j\ne k$ the
    pseudohyperbolic distance from $\Z_j$ to $\Z_k$ is at least
    $\delta$.
\item[(P4)] There is an upper bound $B$ on the number of points
    (counting multiplicity) in each cluster $\Z_k$
\end{itemize}

As in \cite{Lue04b}, any sequence $\Z$ with bounded density can be
subdivided into clusters $Z_k$, with associated open sets $G_k$, so that
the result is an admissible interpolation scheme. It will not be needed,
but it may be interesting that the scheme produced satisfies $G_k =
(Z_k)_\eps$ for some $\eps>0$, and moreover the $G_k$ are disjoint. One
could therefore `fill in the holes' and have a scheme with simply
connected domains.

\section{Zero sets, density, and the
\texorpdfstring{$\dbar$}{dbar}-problem}\label{sec:density}

The following perturbation result differs little in proof from the
version in \cite{Lue04b}. The phrase \term{interpolation invariants}
means quantities, such as the interpolation constant, that are unchanged
under a M\"obius transformation of the disk. This includes the numbers
$p$ and $\alpha$ and in this paper also the estimates on $\invL \phi$.

\begin{proposition}\label{thm:stability}
  Let $\I$ be an admissible interpolation scheme with domains $G_k$
  and clusters $\Z_k$. Assume $\Z = \Union_k \Z_k$ is an
  interpolating sequence for $A^{p,\alpha}_\phi$ with interpolation
  constant $K$. For each $k$ let $\b_k$ be defined by $\b_k(z) = r_k
  z$ and let $\J$ be the interpolation scheme with domains $D_k =
  \b_k(G_k)$ and clusters $W_k = \b_k(Z_k)$. Let $\W = \Union_k
  W_k$.

  There exists an $\eta > 0$ depending only on interpolation invariants
  such that if $\psi(\b_k(z),z) < \eta$ for all $z\in G_k$ and for
  all $k$, then $\W$ is an interpolating sequence for
  $A^{p,\alpha}_\phi$ relative to $\J$. Its interpolation constant can
  be estimated in terms of $\eta$ and interpolation invariants of $\I$.
\end{proposition}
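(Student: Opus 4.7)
The plan is a Neumann-series iteration, modeled on the standard perturbation arguments for interpolation in weighted Bergman spaces. Given data $(w_k') \in X_\J$, I lift it to data for $\I$, solve using the hypothesized interpolation constant $K$, bound the $\J$-defect of the resulting function by $\theta$ times the original norm with $\theta < 1$, and iterate. The telescoping sum then converges in $A^{p,\alpha}_\phi$ and solves the original $\J$-problem.

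For the lift, choose near-minimal representatives $g_k'$ of each $w_k'$ on $D_k$ and set $g_k = g_k' \circ \b_k$ on $G_k$. Since $\b_k(z)=r_k z$ is biholomorphic with derivative $r_k$, composition with $\b_k$ preserves vanishing multiplicities at $Z_k$ versus $W_k$, and so induces an isomorphism $\H(D_k)/\N_k' \to \H(G_k)/\N_k$; let $w_k$ be the image of $w_k'$. The change of variables $w=\b_k(z)$, combined with the Lipschitz estimate for $\phi$ from Lemma~\ref{lem:gradient} and the closeness of $r_k$ to $1$, yields $\|w_k\|^p \le C_0\|w_k'\|^p$ with $C_0$ depending only on interpolation invariants and on $\eta$. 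The hypothesis thus delivers $f_1 \in A^{p,\alpha}_\phi$ with $f_1|_{G_k} \in w_k$ and $\|f_1\| \le K C_0^{1/p}\|(w_k')\|$.

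The crucial step is the defect estimate. A direct computation using $g_k = g_k' \circ \b_k$ shows that on $D_k$, modulo $\N_k'$,
\[
f_1|_{D_k}(w) - g_k'(w) \equiv f_1(w) - f_1(\b_k^{-1}(w)),
\]
because the remaining term $(f_1 - g_k)\circ \b_k^{-1}$ vanishes on $W_k = \b_k(Z_k)$ to the required orders. Since $\psi(w,\b_k^{-1}(w)) < \eta$, the pointwise bound for the holomorphic difference, obtained by combining Cauchy's derivative estimate, the sub-mean-value property for $|f_1|^p$ already used in Section~\ref{sec:properties}, and the Lipschitz control on $\phi$, gives
\[
\av{f_1(w) - f_1(\b_k^{-1}(w))}^p e^{-p\phi(w)}(1-|w|^2)^{\a p - 1} \le \frac{C\eta^p}{|D(w,r)|} \int_{D(w,r)} \av{f_1(u) e^{-\phi(u)}}^p (1-|u|^2)^{\a p - 1}\,dA(u)
\]
for a fixed $r > \eta$. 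Integrating over $w \in D_k$, applying Fubini, and using the bounded overlap of the $r$-neighborhoods of the $D_k$ (a consequence of admissibility once $\eta$ is small) yields $\sum_k \|w_k''\|^p \le C_1 \eta^p \|f_1\|^p \le C_1 C_0 K^p \eta^p \|(w_k')\|^p$.

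Choose $\eta$ so small that $\theta \defeq (C_1 C_0)^{1/p} K \eta < 1$. Iterating the construction yields $f_n$ with $\|f_n\| \le \theta^{n-1} K C_0^{1/p}\|(w_k')\|$ and residual norm $\theta^n \|(w_k')\|$. The sum $f = \sum_n f_n$ converges in $A^{p,\a}_\phi$ (using $\|\cdot\|^p$-subadditivity when $p < 1$), interpolates $(w_k')$, and satisfies $\|f\| \le K C_0^{1/p}/(1-\theta) \cdot \|(w_k')\|$. The main obstacle is the weighted pointwise difference bound: it requires uniformly controlling $e^{-p\phi}$ and $(1-|\cdot|^2)^{\a p - 1}$ on a fixed pseudohyperbolic disk around each $w$, which the Lipschitz bound of Lemma~\ref{lem:gradient} provides independently of $k$; once that is in place the Neumann-series bookkeeping is routine.
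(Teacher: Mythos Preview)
Your proposal is correct and follows essentially the same approach as the paper. The paper only sketches the argument by reference to \cite{Lue04b}, but the single step it highlights---estimating $|f(z/r_k)-f(z)|^p$ by a small multiple of the average of $|f|^p$ on an enlarged neighborhood, using that $\phi$ is Lipschitz in the hyperbolic metric---is exactly your defect estimate (note $\b_k^{-1}(w)=w/r_k$), and the surrounding Neumann-series iteration is the standard machinery from \cite{Lue04b} that the paper is invoking.
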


One stage in the proof in \cite{Lue04b} is an estimate of $|f(z/r_k) -
f(z)|^p$ by a small multiple of the average of $|f|^p$ on $(D_k)_{1/2}$
(the new domains expanded by pseudohyperbolic distance $1/2$).
This particular step can be done similarly when weighting with
$e^{-p\phi}(1 - |z|^2)^{\a p - 1}$. This relies mostly on the fact that $\phi$ is
Lipschitz. The rest of the proof is essentially the same.

In \cite{Lue96} it was shown that the following function could be used
to determine whether a sequence $\Z$ in $\ID$ is a zero sequence for a
variety of analytic function spaces:
\begin{equation*}
  k_\Z(\z) = \sum_{a\in \Z} k_a(z) = \sum_{a\in \Z}
    \frac{(1-|a|^2)^2}{|1-\bar az|^2}\frac{|z|^2}{2}
\end{equation*}
where a point with multiplicity $m$ occurs $m$ times in the sum. In
particular, $\Z$ is a zero set if and only if a certain weighted
function space is nontrivial. In our current context (covered in the
last section of \cite{Lue96}), we have the following theorem.

\begin{theorem}
Let $\Z$ be a sequence in $\ID$. Define the function $k_{\Z}$ as above.
The following are equivalent.
\begin{enumerate}
\item $\Z$ is a zero set for some function in $A^{p,\a}_\phi$.
\item There exists a \textbf{nowhere zero} analytic function $F$
    such that
\begin{equation}\label{eq:finite}
  \int_{\ID} \av{F(\z)e^{-\phi(\z)}}^p e^{pk_\Z(\z)} (1-|\z|^2)^{\a p-1}
        \,dA(\z)< \infty
\end{equation}

\item There exists a \textbf{nonzero} analytic function $F$
    satisfying \eqref{eq:finite}.
\end{enumerate}
\end{theorem}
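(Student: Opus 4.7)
The implication (2) $\Rightarrow$ (3) is trivial. My plan is to prove (1) $\Leftrightarrow$ (3) by constructing a single auxiliary analytic function $B$ on $\ID$ with $\Z(B) = \Z$ whose modulus satisfies a two-sided bound $c_1 e^{-k_\Z(z)} \le |B(z)| \le c_2 e^{-k_\Z(z)}$ for all $z \in \ID$. This reduces both remaining implications to the pointwise identity $|f/B|^p e^{-p\phi} e^{pk_\Z} \asymp |f|^p e^{-p\phi}$ (or the same with $f = FB$). The construction is carried out in \cite{Lue96}; what I would do is recall it and verify that the estimates descend to the weighted setting considered here.

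The intuition is that $k_\Z$ is a smoothed-out potential for the counting measure of $\Z$: a direct computation gives $\invL k_a(z) = \tfrac12 (1 - \psi(z,a)^2)^2$, a smooth bump peaked at $a$, so $\invL k_\Z$ is a nonnegative smooth function which behaves like a regularization of $\sum_a \delta_a$. To manufacture $B$, one picks smooth cut-offs $\chi_a$ with $\chi_a(z) = \log\psi(z,a)$ near $a$, forms $\sigma = \sum_a \chi_a$, and solves a $\dbar$-problem $\dbar u = \dbar\sigma$ with a weight adapted to $e^{-p(\phi - k_\Z)}(1-|z|^2)^{\a p - 1}$; the $L^p$ control on $u$ then converts, via standard sub-mean-value estimates, to the pointwise comparison of $|B| = |\exp(\sigma - u)|$ with $e^{-k_\Z}$. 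The main obstacle is this $\dbar$-step, which I would import wholesale from \cite{Lue96}, where the boundedness of $\invL \phi$ and the smoothness of $\invL k_\Z$ combine to produce the needed integral estimates. (If $\Z$ is so dense that no such $B$ can exist then both (1) and (3) fail anyway, so no difficulty arises.)

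With $B$ in hand, (1) $\Rightarrow$ (2) is routine: given $f \in A^{p,\a}_\phi$ with $\Z(f) = \Z$, set $F = f/B$, which is analytic and nowhere zero on $\ID$; the lower bound on $|B|$ gives
\begin{equation*}
  \int_\ID \av{F(z)e^{-\phi(z)}}^p e^{pk_\Z(z)}(1-|z|^2)^{\a p -1}\,dA(z)
    \le c_1^{-p} \int_\ID \av{f(z)e^{-\phi(z)}}^p (1-|z|^2)^{\a p - 1}\,dA(z) < \infty.
\end{equation*}
For (3) $\Rightarrow$ (1), given a nonzero analytic $F$ satisfying \eqref{eq:finite}, set $f = FB$; the upper bound on $|B|$ yields
\begin{equation*}
  \int_\ID \av{f(z)e^{-\phi(z)}}^p (1-|z|^2)^{\a p - 1}\,dA(z) \le c_2^p \int_\ID \av{F(z)e^{-\phi(z)}}^p e^{pk_\Z(z)}(1-|z|^2)^{\a p - 1}\,dA(z) < \infty,
\end{equation*}
so $f \in A^{p,\a}_\phi$ with $\Z(f) \supset \Z$. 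Thus $\Z \cup \Z(F)$ is an $A^{p,\a}_\phi$-zero sequence, and the subsequence-of-zero-set result of \cite{Lue96} cited after Proposition~\ref{thm:zeroset} promotes this to $\Z$ itself being a zero sequence.
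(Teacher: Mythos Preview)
Your central claim---a two-sided pointwise bound $c_1 e^{-k_\Z(z)} \le |B(z)| \le c_2 e^{-k_\Z(z)}$ for an analytic $B$ with $\Z(B)=\Z$---cannot hold. The function $k_\Z$ is a locally uniformly convergent sum of smooth nonnegative terms (each $k_a$ is bounded by a constant times $(1-|a|^2)^2$ on compact subsets of $\ID$, and $\sum(1-|a|^2)^2<\infty$), so $e^{-k_\Z}$ is continuous and strictly positive everywhere in $\ID$. But $B$ vanishes at every point of $\Z$, so the lower bound fails at (and near) each such point. Consequently the inequality you write for $(1)\Rightarrow(2)$ is not valid: near $a\in\Z$ you are dividing by $|B|^p$, which is much smaller than $c_1^p e^{-pk_\Z}$. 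Your $\dbar$-construction is also problematic on its own terms: the solution $u$ is not holomorphic, so ``standard sub-mean-value estimates'' do not convert $L^p$ control on $u$ into the pointwise bounds you need, and invoking $\dbar$-solvability in a $k_\Z$-weighted space edges toward circularity with condition~\ref{DBAR} of Theorem~\ref{thm:main}.

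The paper does not prove this theorem directly; it imports it from \cite{Lue96}, but it does exhibit the mechanism: the explicit Horowitz-type product $\Psi_\Z$, for which one has an \emph{identity} of the form
\[
  \log|\Psi_\Z(z)| + k_\Z(z) \;=\; \tfrac12\sum_{a\in\Z}\bigl(\log|\phi_a(z)|^2 + 1 - |\phi_a(z)|^2\bigr) + (\text{harmonic}),
\]
with $\phi_a(z)=\bar a\,\dfrac{a-z}{1-\bar a z}$. Each summand on the right is $\le 0$ (since $\log x \le x-1$) and equals $0$ only at the corresponding zero; this is what replaces your impossible two-sided bound. One direction then comes from the nonpositivity of the sum, and the other from controlling the sum together with sub-mean-value estimates applied to the \emph{holomorphic} quotient $f/\Psi_\Z$ rather than to a $\dbar$-solution. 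The correct route, in short, is an explicit product plus an exact algebraic identity, not a pointwise comparison of $|B|$ with $e^{-k_\Z}$.
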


The integral in \eqref{eq:finite} defines a norm that determines a space we
will call $A^{p,\a}_{\phi,\Z}$. Then $\Z$ is a zero set for
$A^{p,\a}_{\phi}$ if and only if $A^{p,\a}_{\phi,\Z}$ is non trivial.

Moreover, if we define
\begin{equation*}
  \Psi_\Z (\z) = z^m\prod_{\substack{a\in \Z\\ a \ne 0}} \bar a\frac{a-z}{1-\bar az}
    \exp\left( 1 - \bar a\frac{a-z}{1-\bar az} \right)
\end{equation*}
(where $m$ is the multiplicity of the origin, if it belongs to $\Z$, and
zero otherwise) then $f \mapsto f/\Psi_\Z$ is a one-to-one
correspondence between functions in $A^{p,\a}_{\phi}$ that vanish on
$\Z$ to at least the given multiplicities and $A^{p,\a}_{\phi,\Z}$. We
will be applying this only when $\Z$ has no points in $D(0,\delta)$ for
some fixed $\delta > 0$, in which case the value of $|\Psi_\Z(0)| =
\prod |a|^2e^{1-|a|^2}$ can be estimated from below in terms of $\delta$
and the density of $\Z$.

Note that the convergence of the product defning $\Psi_\Z$ requires the
sequence $(1 - |a|^2)^2$, $a\in \Z$, to be summable. This follows from
the formula (1) in \cite{Lue96} in light of the discussion in section~5
of that paper. For interpolating sequences, which have bounded density,
this is automatically true without any need for the results in
\cite{Lue96}.

In the case where $\phi \equiv 0$, the paper \cite{Lue04b} showed that
$\Z$ is an interpolating sequence if and only a certain density
condition is satisfied. In the general case, that density condition will
involve integrals of $\phi$. It was also shown that this is equivalent to
bounds on the solutions $u$ of the $\dbar$-equation
\begin{equation*}
  (1 - |z|^2)\dbar u = f
\end{equation*}
in a certain weighted function space. In the general case let
$L^{p,\a}_{\phi,\Z}$ be the measurable function version of
$A^{p,\a}_{\phi,\Z}$. We need a bounded operator on this space that maps
$f$ to a solution $u$.

\begin{theorem}\label{thm:main}
Let $\Z$ be a set with multiplicity in $\ID$, $p \ge 1$, $\a>0$, and
$\phi$ a positive subharmonic function satisfying $0 < m \le \invL
\phi < M < \infty$ in $\ID$. The following are equivalent:
\begin{enumerate}
  \item $\Z$ is an interpolating sequence for $A^{p,\a}_\phi$ relative
        to any admissible interpolation scheme.\label{IS1}
  \item $\Z$ is an interpolating sequence for $A^{p,\a}_\phi$
        relative to some interpolation scheme.\label{IS2}
  \item The upper uniform density $S_\phi^+(\Z)$ \textup{(}defined
        below\textup{)} is less than $\a$.\label{UUD}
  \item $\Z$ has bounded density and the $\dbar$-problem has a bounded
        solution operator on $L^{p,\a}_{\phi,\Z}$.\label{DBAR}
\end{enumerate}
\end{theorem}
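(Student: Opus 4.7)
The plan is to establish the cycle $\ref{IS1}\Rightarrow\ref{IS2}\Rightarrow\ref{DBAR}\Rightarrow\ref{UUD}\Rightarrow\ref{IS1}$, closely mirroring the structure in \cite{Lue04b} but tracking the weight $\phi$ carefully. All the machinery of Section~\ref{sec:properties} (bounded density, interpolation constant by closed graph, M\"obius invariance via $\phi_a$, single-point adjunction) and the zero-set correspondence $f\mapsto f/\Psi_\Z$ between $A^{p,\a}_\phi$ and $A^{p,\a}_{\phi,\Z}$ from Section~\ref{sec:density} are at our disposal.

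The step $\ref{IS1}\Rightarrow\ref{IS2}$ is immediate, since admissible schemes are instances of interpolation schemes. For $\ref{IS2}\Rightarrow\ref{DBAR}$, bounded density of $\Z$ has already been obtained in Section~\ref{sec:properties}. To produce a bounded solution operator for $(1-|z|^2)\dbar u = f$ on $L^{p,\a}_{\phi,\Z}$, I would fix an arbitrary admissible scheme for $\Z$, choose an a priori (possibly large) solution $u_0$ via a Cauchy-type integral kernel of the kind used in \cite{Lue04b}, and correct it to a bounded solution by interpolation: on each $G_k$ the local function $u_0|_{G_k}$ determines a coset in $\H(G_k)/\N_k$ (after subtracting the harmonic approximation of $\phi$ supplied by Lemma~\ref{lem:harmonic}, so that all local weights are comparable to constants), and the $X_\I$-norm of these cosets is controlled by $\|f\|_{L^{p,\a}_{\phi,\Z}}$. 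The interpolation operator then yields a global holomorphic $H$ matching the cosets, and $u = u_0 - H$ is the desired bounded solution. The division $f\mapsto f/\Psi_\Z$ (together with the lower bound on $|\Psi_\Z(0)|$ in terms of $\delta$ and the density of $\Z$) is what carries estimates back and forth between $L^{p,\a}_\phi$ and $L^{p,\a}_{\phi,\Z}$.

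For $\ref{DBAR}\Rightarrow\ref{UUD}$, one tests the solution operator against localized data concentrated at a point $a\in\ID$ (constructed using the harmonic approximation $h_a$ and the M\"obius map $M_a$ to keep the weight under control); reading off the resulting Bergman-type inequality yields the density bound $S_\phi^+(\Z) < \a$. Conversely, $\ref{UUD}\Rightarrow\ref{DBAR}$ is the crux: solve $\dbar u = f/(1-|z|^2)$ using H\"ormander's $L^2$ method (for $p=2$) or a Berndtsson--Ortega-Cerd\`a type construction (for $p\ne 2$) against the total weight $\phi + k_\Z - \a\log\bigl(1/(1-|z|^2)\bigr)$. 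Strict subharmonicity of this weight on pseudohyperbolic balls, uniformly, is exactly the content of $S_\phi^+(\Z)<\a$ combined with the lower bound $\invL\phi\ge m$. Finally $\ref{DBAR}\Rightarrow\ref{IS1}$ is the standard transfer: given any admissible scheme $\I$ and data $(w_k)\in X_\I$, build a smooth quasi-interpolant $F$ from representatives $g_k$ of minimal norm via a partition of unity subordinate to a slight expansion of the $G_k$; check, using the bounded overlap of the $G_k$ (from Theorem~\ref{thm:lowerbound}) and the pointwise submean inequality from Section~\ref{sec:properties}, that $\dbar F/(1-|z|^2) \in L^{p,\a}_{\phi,\Z}$ with norm bounded by $\|(w_k)\|_{X_\I}$; then $f = F - u$ interpolates $(w_k)$ with the required norm bound. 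Proposition~\ref{thm:stability} can be used to reduce to schemes whose clusters are singletons on a convenient lattice, if desired.

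The principal obstacle I expect is $\ref{UUD}\Rightarrow\ref{DBAR}$ when $p<2$, where H\"ormander's method is not directly available and one must instead construct an explicit integral kernel whose boundedness in $L^{p,\a}_{\phi,\Z}$ rests on the subharmonicity of $\phi$ together with the Lipschitz estimate from Lemma~\ref{lem:gradient}: the latter is what permits $e^{-p\phi(z)}$ and $e^{-p\phi(w)}$ to be exchanged within pseudohyperbolic balls at uniform cost. A secondary technical point is that the conversion $f\leftrightarrow f/\Psi_\Z$ introduces the factor $e^{pk_\Z}$ in \eqref{eq:finite}, and one must verify that every $\dbar$-estimate passes through this division without loss, so that the density condition really does transfer between the setting of $A^{p,\a}_\phi$ and that of $A^{p,\a}_{\phi,\Z}$.
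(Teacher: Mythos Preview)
Your cycle differs from the paper's, which runs $\ref{IS1}\Rightarrow\ref{IS2}\Rightarrow\ref{UUD}\Rightarrow\ref{DBAR}\Rightarrow\ref{IS1}$, and the difference is not cosmetic: the step you label $\ref{IS2}\Rightarrow\ref{DBAR}$ does not work as written. You propose to take a raw Cauchy solution $u_0$ of $(1-|z|^2)\dbar u=f$ and correct it by interpolating the ``cosets'' $u_0|_{G_k}+\N_k$. But $u_0$ is not holomorphic on $G_k$ (the datum $f$ is an arbitrary element of $L^{p,\a}_{\phi,\Z}$), so it does not define a coset in $\H(G_k)/\N_k$ at all and there is nothing for the interpolation operator to act on. Even if one extracted a holomorphic piece to interpolate, the resulting correction $H$ would lie in $A^{p,\a}_\phi$, whereas the required bound on $u=u_0-H$ is in $L^{p,\a}_{\phi,\Z}$, which carries the extra factor $e^{pk_\Z}$; membership in the former does not control the latter. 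The correspondence $f\mapsto f/\Psi_\Z$ you cite only takes \emph{holomorphic} functions vanishing on $\Z$ to $A^{p,\a}_{\phi,\Z}$ and is not a device for transferring $L^p$ estimates on non-holomorphic $u$. Your $\ref{DBAR}\Rightarrow\ref{UUD}$ sketch (``test against localized data'') is also not how the paper proceeds and would itself require a separate argument.

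What the paper does instead for $\ref{IS2}\Rightarrow\ref{UUD}$ is the key idea missing from your plan: use interpolation to manufacture, after a small inward perturbation via Proposition~\ref{thm:stability}, a nonvanishing function in $A^{p,\a}_{\phi,\W}$ of norm $1$ with a uniform lower bound at $0$; perturbing back outward yields a constant $\b<1$ and a function $g$ with $\int_\ID |g\,e^{k_\Z}|^{p/\b}e^{-p\phi}(1-|z|^2)^{\a p-1}\,dA=1$. Now solve the extremal problem of maximizing $|g(0)|$ under this constraint; the extremal integrand is then a Carleson measure, giving a pointwise bound, and taking logarithms and circular means produces $S_\phi(\Z,r)\le\a-\eps$ uniformly over M\"obius images. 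For $\ref{UUD}\Rightarrow\ref{DBAR}$ the paper does not appeal to H\"ormander: the density inequality is rewritten as $\invL(\tau*\sigma_r)\le-\eps$, Lemma~\ref{lem:harmonic} (applied to $-\tau*\sigma_r$) then furnishes holomorphic functions $g_a$ with two-sided bounds on $|g_a\,e^{k_\Z-\phi}|$, and an explicit Cauchy-type kernel built from the $g_a$ and a partition of unity gives the bounded solution operator on $L^{p,\a}_{\phi,\Z}$ for all $p\ge1$ at once. Your $\ref{DBAR}\Rightarrow\ref{IS1}$ step does match the paper's.
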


We postpone the proof to discuss the density condition. We prefer to use
the following summation to define density. It was shown in \cite{Lue04a} to be
equivalent to the usual one for the standard weights.

For $r \in (0,1)$ let
\begin{equation*}
    \hat k_\Z(r) = \frac{1}{2\pi}\int_0^{2\pi} k_\Z(re^{it}) \,dt =
    \frac{r^2}{2} \sum_{a\in \Z} \frac{(1 - |a|^2)^2}{1 -
    |a|^2r^2}
\end{equation*}
then let
\begin{equation*}
  S(\Z, r) = \frac{\hat k_{\Z}(r)} {\log\left( \frac{1}{1 - r^2} \right)}
\end{equation*}

For each $a\in \ID$, let $\Z_a = M_a(\Z)$, where as before $M_a$ is the
M\"obius transformation exchanging $a$ and $0$.
In case $\phi \equiv 0$, the density we used in \cite{Lue04b} was
$S^+(\Z)$, defined by
\begin{equation*}
  S^+(\Z) = \limsup_{r\to 1-} \,\sup_{a\in\ID} S(\Z_a,r)
\end{equation*}
It was shown in \cite{Lue04a} that this is equivalent to the usual
\term{upper uniform density} $D^+$ for sets $\Z$ (as defined in
\cite{Sei04} for example). The density inequality equivalent to
interpolation in $A^{p,\a}$ (where $\phi \equiv 0$) is that $S^+(\Z) <
\a$. (In \cite{Lue04a} and \cite{Lue04b}, the condition was written as
$S^+(\Z) < (\a + 1)/p$, but the number $\a$ there was the exponent of
$(1-|z|^2)$ that we are writing here as $\a p - 1$.)

For the more general $\phi$, our density condition has to incorporate
$\phi$. Let
\begin{equation*}
  \hat \phi (r) = \frac{1}{2\pi}\int_0^{2\pi} \phi (re^{it}) \,dt
    - \phi(0)
\end{equation*}
and define
\begin{equation*}
  S_\phi (\Z, r) = \frac{\hat k_{\Z}(r) - \hat \phi(r)}
    {\log\left( \frac{1}{1 - r^2} \right)}
\end{equation*}
and finally
\begin{equation*}
  S_\phi^+ (\Z) = \limsup_{r\to 1-} \,\sup_{a\in\ID} S_{\phi_a}(\Z_a,r)
\end{equation*}

As part of the proof, we need to be able to express this density in
terms of the invariant Laplacian of the functions involved. This follows
easily from the following, obtained from Green's formula. Recall that
$d\lambda(z)$ is the invariant messure $dA(z)/(1 - |z|^2)^2$:
\begin{equation*}
  \hat\phi(r) = \frac{1}{\pi} \int_{r\ID} \invL\phi(z) \log \left(
    \frac{r^2}{|z|^2} \right) \,d\lambda(z)
\end{equation*}
A similar formula holds for $\hat k_\Z$. If we combine these two
formulas, plus one for $\log\left( \frac{1}{1-|z|^2} \right)$ we get the
following fromula
\begin{equation}\label{eq:IN}
  S_\phi (\Z,r) - \a = \frac{1}{\pi\log \left( \frac{1}{1 - r^2} \right)}
  \int_{r\ID} \invL \left ( k_\Z(z) - \phi(z) - \a \log\left( \frac{1}{1
  - |z|^2} \right) \right) \log\frac{r^2}{|z|^2} \,d\lambda(z)
\end{equation}
This relies on the calculation
\begin{equation*}
  \frac{1}{\pi} \int_{r\ID} \log\frac{r^2}{|z|^2} \,d\lambda(z) =
  \log\left( \frac{1}{1 - r^2} \right)\,.
\end{equation*}
If we temporarily let
\begin{align*}
  \tau(\z)   &=  k_\Z(\z) - \phi(\z) - \a \log\left( \frac{1}{1 - |\z|^2}
                \right)\\
  \sigma_r(\z) &= \frac{ \log\frac{r^2}{|\z|^2} \chi_{r\ID}(\z)}
                {\pi\log \left( \frac{1}{1 - r^2} \right)}
\end{align*}
then invariant nature of the formula in \eqref{eq:IN} allows us to write
\begin{equation}\label{eq:density_at_a}
  S_{\phi_a} (\Z_a,r) - \a = \frac{1}{\pi\log \left( \frac{1}{1 - r^2} \right)}
  \int_{D(a,r)} \invL \tau(z) \log\frac{r^2}{|M_a(z)|^2} \,d\lambda(z)
\end{equation}
and then the right side of equation~\eqref{eq:density_at_a} is the
\term{invariant convolution} of $\invL\tau$ and $\sigma_r$. That is
\begin{equation*}
  S_{\phi_a} (\Z_a,r) - \a = (\invL\tau) * \sigma_r (a) \equiv \int_\ID
  (\invL\tau(z)) \sigma_r(M_a(z)) \,d\lambda(z).
\end{equation*}
We know that the invariant convolution has the following
properties if one of the functions is radially
symmetric (as is $\sigma_r $):
\begin{align*}
   \tau * \sigma_r &= \sigma_r * \tau\\
   (\invL\tau)*\sigma_r &= \invL(\tau*\sigma_r)
\end{align*}
Therefore, the density condition~\ref{UUD} of Theorem~\ref{thm:main} is
equivalent to the requirement that there exists an $r_0 \in (0,1)$ and
an $\eps > 0$ such that the invariant Laplacian $\invL (\tau*\sigma_r)$
is bounded above by $-\eps$ for all $r > r_0$. We note that this means
we can (and will) invoke Lemma~\ref{lem:harmonic} on $-(\tau*\sigma_r)$.
Note also that the fact that $\tau$ is Lipschitz in the hyperbolic
metric shows that  $\tau - \tau * \sigma_r$ is a bounded function with a
bound that depends on $r$.

Recall that originally the space $A^p_\phi$ had $\a=0$ and no
requirement that $\phi$ be positive. We modified it by subtracting $\a
\log\left(\frac{1}{1-|z|^2}\right)$ and a harmonic function.
Consequently, the combination $\phi(z) + \a\log\left( \frac{1}{1-|z|^2}
\right)$ that appears in equation~\eqref{eq:density_at_a} is in fact the
original exponent defining $A^p_\phi$, up to an added harmonic function.
Therefore the means and invariant Laplacian of $\phi(z) + \a\log\left(
\frac{1}{1-|z|^2} \right)$ are the same as those of the original $\phi$.

\section{Proofs}

The proof of Theorem~\ref{thm:main} proceeds just as in \cite{Lue04b},
so we will only describe the highlights.

Given an interpolating sequence $\Z$ for an admissible scheme $\I$, we
can delete the pairs $(G_k,Z_k)$ where $Z_k$ meets $D(0,1/2)$ and add
the domain $G_0 = D(0,1/2)$ with cluster $Z_0 = \{ 0 \}$ to obtain a new
scheme $\J$. Then a function $f$ exists with $f(0) = 1$ that vanishes on
the union $\Z'$ of the remaining clusters. We get an estimate on the
$A^{p,\a}_{\phi}$-norm of $f$ that depends only on the data about $\I$
that are invariant under M\"obius thransfomations of $\I$. We can
normalize $f$ and then we get a lower bound on the value of $f(0)$. We
can modify $f$ so that it vanishes only on $\Z'$, still having norm $1$
and retaining a lower bound on $f(0)$.

We then divide $f$ by $\Psi_{\Z'}$ to get a nonvanishing function in
$A^{p,\a}_{\phi,\Z'}$. Since $\Z$ and $\Z'$ differ only in a finite
number of points (the number of which can be estimated in terms of
interpolation invariants), this space is equivalent to
$A^{p,\a}_{\phi,\Z}$. We can do all of this after first perturbing $\I$
inward an amount small enough that the perturbed sequence $\W$ remains
an interpolating sequence and so we obtain $f\in A^{p,\a}_{\phi,\W}$
which we normalize to have norm $1$ and we still obtain a lower bound on
$f(0)$.

Following \cite{Lue04b}, we can perturb $\W$ back outward to $\Z$ and
obtain a constant $\b < 1$ and a new function $g$ that satisfies
\begin{equation*}
  \int_{\ID} \av{g(z) e^{k_\Z(z)}}^{p/\b} e^{-p\phi(z)} (1 -
    |z|^2)^{\a p - 1} \,dA = 1
\end{equation*}
while retaining a lower bound on $g(0)$. Solve an extremal problem:
maximize $|g(0)|$ subject to the above equality to obtain a new
function $g$ such that the above integrand defines a Carleson measure,
from which we obtain a constant $C$ such that
\begin{equation*}
   \av{g(z) e^{k_\Z(z)}}^{p/\b} e^{-p\phi(z)} (1 -
   |z|^2)^{\a p} \le C \quad \text{for all $z\in \ID$.}
\end{equation*}

Now consider
\begin{multline*}
  \frac{1}{2\pi}\int_0^{2\pi} \frac{p}{\b} \log |g(re^{it})| +
        \frac{p}{\b}k_\Z(re^{it}) - p(\phi(re^{it})-\phi(0)) + \a p
        \log (1 - |r|^2) \,dt \\
  \begin{aligned}
    &\le \log \left( \frac{1}{2\pi} \int_0^{2\pi} \av{g(re^{it})
        e^{k_\Z(re^{it})}}^{p/\b}e^{-(\phi(re^{it})-\phi(0))}(1 -
        |r|^2)^{\a p} \,dt \right) \\
    &\le \log C,
  \end{aligned}
\end{multline*}
The extra factor $e^{\phi(0)}$ can be included because we have an
estimate on $\phi(0)$ in terms of $\| \invL \phi \|_\infty$. We multipy
this by $\b/p$ and use the fact that the mean of $\log|g|$ exceeds
its value at $0$ to get
\begin{equation*}
  \hat k_\Z(r) - \b \hat \phi - \b \a \log\frac{1}{1 - r^2}
    \le C - \log |g(0)|
\end{equation*}
We can rewrite this in terms of the invariant Laplacian as discussed
previously (and incorporate $\log |g(0)|$ into the constant):
\begin{equation}\label{eq:IL}
   \int_{r\ID} \invL \left( k_\Z(r) - \b\phi - \b \a
   \log\frac{1}{1 - |z|^2}  \right)
   \log \left( \frac{r^2}{|z|^2} \right) \,d\lambda \le C
\end{equation}
We can estimate as follows: for some $\eps > 0$
\begin{equation*}
   \b \invL \left( \phi + \a \log\left( \frac{1}{1 - |z|^2}  \right)  \right)
    \le \invL \left( \phi + (\a - 2\eps) \log\frac{1}{1 - |z|^2}  \right)
\end{equation*}
because the invariant Laplacian on the left side is bounded away from
$0$ and the invariant Laplacian of the $\log$ expression is constant.
Inserting this into \eqref{eq:IL} and then rewiting the result in terms
of means, we obtain
\begin{equation*}
  \hat k_\Z(r) - \hat \phi \le (\a - 2\eps) \log \frac{1}{1 - r^2} + C
\end{equation*}
Divide this by $\log\frac{1}{1 - r^2}$ and then, for $r$ sufficiently
near $1$ we have
\begin{equation}\label{eq:density_at_0}
  \frac{\hat k_\Z(r) - \hat \phi}{\log\frac{1}{1 - r^2}} \le \a - \eps
\end{equation}
Since the constants have estimates that are uniform over all M\"obius
transforms, we can replace $\Z$ by its M\"obius transforms $\Z_a$ and
take the supremum of the above inequality over all $a$ to obtain the
required density condition~\ref{UUD}:
\begin{equation*}
  \sup_{a\in \ID} \frac{\hat k_{\Z_a}(r) - \hat \phi_a}{\log
    \frac{1}{1-r^2}} \le \a - \eps
\end{equation*}
for all $r$ sufficiently close to $1$.

As we saw at the end of section~\ref{sec:density}, the
condition~\eqref{eq:density_at_0} is equivalent to the existence of an a
negative upper bound on the invariant Laplacian of the convolution
$\tau*\sigma_r$ where
\begin{align*}
  \tau(\z)   &=  k_\Z(\z) - \phi(\z) - \a \log\left( \frac{1}{1 - |\z|^2}
                \right)\\
  \sigma_r(\z) &= \frac{ \log\frac{r^2}{|\z|^2} \chi_{r\ID}(\z)}
                {\pi\log \left( \frac{1}{1 - r^2} \right)}
\end{align*}
Then Lemma~\ref{lem:harmonic} (applied to $-\tau*\sigma_r(\z)$) provides
us with a harmonic function $h$ such that $\tau*\sigma_r(\z) + h(\z)$ is
everywhere negative and there is a lower bound on its value at $0$ in
terms of the sup norm of the invariant Laplacian. Since $\tau -
\tau*\sigma_r$ is bounded, we get a similar result for $\tau$ itself.
That is, there exists constants $C$ and $\eps$ (depending only on
$\phi$, $p$, $r$ and the scheme $\I$) and a harmonic function $h$ such
that
\begin{equation*}
  k_\Z(\z) - \phi(\z) + h(\z) \le (\a-\eps) \log\left( \frac{1}{1 -
    |\z|^2} \right)\\
\end{equation*}
and
\begin{equation*}
  k_\Z(0) - \phi(0) + h(0) \ge -C
\end{equation*}
Using the uniformity of our estimates over M\"obius transformations, we
obtain for each $a \in \ID$ a harmonic function $h_a$ such that
\begin{equation*}
  k_\Z(\z) - \phi(\z) + h_a(\z) \le (\a-\eps) \log\left( \frac{1}{1 -
    |M_a(\z)|^2} \right)\\
\end{equation*}
and
\begin{equation*}
  k_\Z(a) - \phi(a) + h_a(a) \ge -C
\end{equation*}
Exponentiating, we get holomorphic functions $g_a(z)$ and constants
$\delta > 0$ and $C$ such that
\begin{equation*}
  \av{g_a(\z) e^{k_\Z(\z) - \phi(\z)}}  \le \frac{1}{\left( 1 -
    |M_a(\z)|^2 \right)^{\a-\eps}} \\
\end{equation*}
and
\begin{equation*}
   \av{g_a(a) e^{k_\Z - \phi}} \ge \delta
\end{equation*}
These functions allow us to construct a solution of the $\dbar$-equation
exactly as in \cite{Lue04a}. That is the solution of $(1 - |z|^2)\dbar
u(z) = f(z)$ is given by
\begin{equation*}
  u(z) = \frac{1}{\pi} \sum_{j=1}^\infty g_{a_j}(z) \int_{\ID}
    \frac {\g_j(w) f(w)} {g_{a_j}(w)} \frac {(1 - |w|^2)^{m-1}}
    {(z - w) ( 1 - \bar wz )^m} \,dA(w)
\end{equation*}
where $\g_j$ is a suitable partition of unity and $m$ is a sufficiently
large integer. The lower estimate on $g_{a_j}e^{k_\Z - \phi}$ at $a_j$
allows us to divide by it on the support of $\g_j$, provided that
support is sufficiently small. The upper estimates allow us to show that
the operator is bounded on $L^{p,\a}_{\phi,\Z}$. This shows that
condition \ref{UUD} of theorem~\ref{thm:main} implies condition
\ref{DBAR}.

Finally, given solutions with bounds for the $\dbar$-equation, we can
solve any interpolation problem just as in \cite{Lue04b}. This ends the
(sketch of the) proof.

If one returns to the original space $A^p_\phi$, the theorem can be
restated as follows:

\begin{theorem}
Let $\Z$ be a set with multiplicity in $\ID$, $p \ge 1$, and
$\phi$ a subharmonic function satisfying $0 < m \le \invL
\phi < M < \infty$ in $\ID$. The following are equivalent:
\begin{enumerate}
  \item $\Z$ is an interpolating sequence for $A^{p}_\phi$ relative
        to any admissible interpolation scheme.
  \item $\Z$ is an interpolating sequence for $A^{p}_\phi$
        relative to some interpolation scheme.
  \item $S_\phi^+(\Z) < 0$.
  \item $\Z$ has bounded density and the $\dbar$-problem has a bounded
        solution operator on $L^{p}_{\phi,\Z}$.
\end{enumerate}
\end{theorem}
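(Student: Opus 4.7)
The plan is to derive the restated theorem as an immediate corollary of Theorem~\ref{thm:main} by inverting the two reductions performed in Section~2. Fix any $\a \in (0,m)$. By Lemma~2.1 the substitution $\tau(z) = \phi(z) - \a \log(1/(1-|z|^2))$ gives a subharmonic function with $m-\a \le \invL\tau \le M-\a$ and identifies $A^p_\phi$ with $A^{p,\a}_\tau$ as normed spaces; the identification carries over verbatim to the quotient norm on $X_\I$, to the ideal $I_\Z$, and to the measurable version $L^p_{\phi,\Z} = L^{p,\a}_{\tau,\Z}$. Next, Lemma~\ref{lem:harmonic} supplies a harmonic $h_0$ such that $\tau_0 := \tau - h_0 \ge 0$ satisfies all the hypotheses of Theorem~\ref{thm:main}. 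Picking $H_0$ analytic with $\re H_0 = h_0$, the multiplication $f \mapsto f e^{H_0}$ is a zero-set-preserving isometry from $A^{p,\a}_\tau$ onto $A^{p,\a}_{\tau_0}$; since $H_0$ is analytic, the same operation conjugates the $\dbar$-equation $(1-|z|^2)\dbar u = f$ on $L^{p,\a}_{\tau,\Z}$ to the same equation on $L^{p,\a}_{\tau_0,\Z}$ with identical operator norms. Consequently, conditions (1), (2), and (4) of the restated theorem correspond to their counterparts in Theorem~\ref{thm:main}.

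It remains to check that $S^+_\phi(\Z) < 0$ is equivalent to the density condition $S^+_{\tau_0}(\Z) < \a$ supplied by Theorem~\ref{thm:main}. This rests on three simple observations. First, the circular mean of a harmonic function equals its value at the origin, so $\hat{\tau_0}(r) = \hat\tau(r)$. Second, direct computation gives
\begin{equation*}
  \hat\tau(r) = \hat\phi(r) - \a \log \frac{1}{1-r^2}, \quad\text{hence}\quad S_\tau(\Z,r) = S_\phi(\Z,r) + \a.
\end{equation*}
Third, for any $a \in \ID$ the identity
\begin{equation*}
  \log \frac{1}{1 - |M_a(z)|^2} = \log \frac{1}{1 - |z|^2} + 2 \log|1 - \bar a z| + \log \frac{1}{1 - |a|^2}
\end{equation*}
combined with the harmonicity of the M\"obius corrections $h_a^\phi$, $h_a^\tau$ used to define $\phi_a$ and $\tau_a$ shows the same relation propagates at $a$: $S_{\tau_a}(\Z_a,r) = S_{\phi_a}(\Z_a,r) + \a$, uniformly in $a$. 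Taking $\sup_a$ and $\limsup_{r\to 1}$ yields $S^+_{\tau_0}(\Z) = S^+_\phi(\Z) + \a$, and the equivalence follows. Note that the right-hand side is independent of the chosen $\a$, as it must be.

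The principal obstacle is purely bookkeeping: tracking the harmonic and $\log(1/(1-|z|^2))$ corrections under M\"obius composition and verifying that the isometry $f \mapsto f e^{H_0}$ together with the change of weight preserves every structural ingredient in play (zero-set multiplicities, coset norms, and the $\dbar$ solution operator). After these routine verifications, the restated theorem is an immediate consequence of Theorem~\ref{thm:main}, with no new estimates required.
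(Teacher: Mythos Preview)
Your proposal is correct and follows exactly the route the paper intends: the restated theorem is presented there as an immediate corollary of Theorem~\ref{thm:main}, obtained by undoing the two reductions of Section~2, and the paper's remark at the end of Section~\ref{sec:density} (that $\phi + \a\log\frac{1}{1-|z|^2}$ agrees with the original weight up to a harmonic function, hence has the same means and invariant Laplacian) is precisely your density computation $S^+_{\tau_0}(\Z) = S^+_\phi(\Z) + \a$. One cosmetic slip: since $\tau_0 = \tau - h_0$ with $h_0 = \re H_0$, the isometry from $A^{p,\a}_\tau$ onto $A^{p,\a}_{\tau_0}$ is $f \mapsto f e^{-H_0}$, not $f \mapsto f e^{H_0}$; the rest of the argument is unaffected.
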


\section{\texorpdfstring{$p$}{p} less than \texorpdfstring{$1$}{1}}

Most of the considerations that went into the proof of
theorem~\ref{thm:main} apply equally well to all $p \in (0,\infty)$.
However the last step, constructing a solution of the $\dbar$-equation,
fails when $p < 1$: the integrals in question may not exist when $f$ is
not locally integrable. The way around this deficiency is to replace the
domain of the $\dbar$-equation (normally $L^{p,\a}_{\phi,\Z}$) with a
smaller one. One example: all measurable functions $f$ that are locally
in $L^q$ for some $q \in [1,\infty]$ and such that $m_q(f) \in
L^{p,\a}_{\phi,\Z}$ where
\begin{equation*}
  m_q(f)(\z)  =
  \begin{cases}
    \frac{1}{|D(\z,1/2)|} \int_{D(\z,1/2)} |f|^q  \,dA & q < \infty\\
    \sup_{z\in D(\z,1/2)}  |f(w)|                      & q = \infty
   \end{cases}
\end{equation*}
All holomorphic functions wind up in this space, even with $q = \infty$.
Moreover, when proving \ref{DBAR}${}\Rightarrow{}$\ref{IS1} of
theorem~\ref{thm:main}, the function to which one applies the solution
operator belongs to this space (even with $q=\infty$). The proof in
\cite{Lue04a} of the boundedness of this solution works here for
$p<1$ just as well as for $p \ge 1$.

Therefore, Theorem~\ref{thm:main} is valid for $p < 1$ provided only
that in part~\ref{DBAR} we replace the space $L^{p,\a}_{\phi,\Z}$ with
this modified version.

\section{Application to O-interpolation}

Let $\Z$ be a sequence of distinct points in $\ID$ having bounded
density, and let $c_a$, $a \in \Z$, be sequence of values satisfying
\begin{equation}\label{eq:finiteness}
  \sum_{a \in \Z} |c_a|^p \frac{e^{-p\phi(a)}}
  {\delta_a^{pn_a}}(1-|a|^2) < \infty
\end{equation}
where $\delta_a$ is the pseudohyperbolic distance from $a$ to the
nearest point in $\Z\setminus\{ a \}$ and $n_a$ is the number of points
of $\Z$ in $D(a,1/2)$. Then O-interpolation consists of finding a
function $f \in A^p_\phi$ satisfying $f(a) = c_a$ for all $a\in \Z$.

Just as in the addendum to \cite{Lue04b} (the last section), we can
provide an admissible scheme $\I = \{ (G_k,Z_k), k = 1,2,3,\dots \}$ for
$\Z$ and define functions $f_k$ on $G_k$  that have the values $c_a$ at
the points $a$ of $\Z$ that lie in $G_k$. Moreover, the $L^p$-norms of these
functions provides an upper bound for the norm $\| w_k \|$ of the cosets
determined by $f_k$ and these are shown to be less than
\begin{equation*}
  C \sum_{a \in Z_k} |c_a|^p \frac{e^{-p\phi(a)}} {\delta_a^{pn_a}}(1-|a|^2)
\end{equation*}
with $C$ independent of $k$. Thus the finiteness
condition~\eqref{eq:finiteness} dominates $\sum \| w_k \|^p$.

Thus we have created an interpolation problem relative to the scheme $\I$
whose solution would be a function $f$ satisfying $f(a) = c_a$. The density
condition now implies that a solution exists in $A^p_{\phi}$. That is,
the density condition implies O-interpolation.


\newcommand{\noopsort}[1]{} \providecommand{\bysame}{\leavevmode\hbox
to3em{\hrulefill}\thinspace}
\providecommand{\MR}{\relax\ifhmode\unskip\space\fi MR }
\providecommand{\MRhref}[2]{%
  \href{http://www.ams.org/mathscinet-getitem?mr=#1}{#2}
}
\providecommand{\href}[2]{#2}

\end{document}